\renewcommand{\theequation}{\thesection.\arabic{equation}}
\newtheorem{theorem}{Theorem}[section]
\newtheorem{lemma}[theorem]{Lemma}
\newcommand{\eqnsection}{
\renewcommand{\theequation}{\thesection.\arabic{equation}}
    \makeatletter
    \csname  @addtoreset\endcsname{equation}{section}
    \makeatother}
\def\T{{\mathbb T}}
\def\R{{\mathbb R}}
\def\P{{\mathbb P}}
\def\E{{\mathbb E}}
\def\var{\operatorname{Var}}
\def\Z{{\mathbb Z}}
\def\build#1_#2^#3{\mathrel{\mathop{\kern 0pt#1}\limits_{#2}^{#3}}}
\def\rem{\noindent{\bf Remark. }}
\def\ind{{\bf 1}_}
\title{Resistance growth of branching random networks}
\author{
Dayue Chen\thanks{Department of Probability and Statistics, School of Mathematical Sciences, Peking University, Beijing, China, \textit{E-mail}: \texttt{dayue@pku.edu.cn}}, 
\hspace{1mm}
Yueyun Hu\thanks{LAGA, Universit\'e Paris XIII, Villetaneuse, France, \textit{E-mail}: \texttt{yueyun@math.univ-paris13.fr} } 
\hspace{1mm} 
and 
Shen Lin\thanks{Sorbonne Universit\'e, Laboratoire de Probabilit\'es Statistique et Mod\'elisation, Paris, France, \textit{E-mail}: \texttt{shen.lin.math@gmail.com}
\newline Cooperation between D.C.~and Y.H.~was supported by NSFC 11528101, Research of S.L. was partially supported by the grant ANR-14-CE25-0014 (ANR GRAAL).}
}
\date{\tiny\today}
\begin{document}

\maketitle 

\begin{abstract}

Consider a rooted infinite Galton--Watson tree with mean offspring number $m>1$, and a collection of i.i.d.~positive random variables $\xi_e$ indexed by all the edges in the tree. We assign the resistance $m^d\,\xi_e$ to each edge $e$ at distance $d$ from the root. In this random electric network, we study the asymptotic behavior of the effective resistance and conductance between the root and the vertices at depth $n$.  Our results generalize an existing work of Addario-Berry, Broutin and Lugosi on the binary tree to random branching networks. 

\medskip
\noindent {\bf Keywords.} electric networks, Galton--Watson tree, random conductance.

\smallskip
\noindent{\bf AMS 2010 Classification Numbers.}  60F05, 60J80. 
\end{abstract}

\section{Introduction}

An electric network is an undirected locally finite connected graph $G=(V,E)$ with a countable set of vertices $V$ and  a set of edges $E$, endowed with nonnegative numbers $\{r(e), e\in E\}$, called resistances, that are associated to the edges of $G$. The reciprocal $c(e)=1/r(e)$ is called the conductance of the edge $e$. 
It is well-known that the electrical properties of the network $(G,\{r(e)\})$ are closely related to the nearest-neighbor random walk on $G$, whose transition probabilities from a vertex are proportional to the conductances along the edges to be taken. See, for instance, the book of Lyons and Peres \cite{LP} for a detailed exposition of this connection. 

To study random walks in certain random environments, it is natural to consider a random electric network by choosing the resistances independent and identically distributed. 
For example, the infinite cluster of bond percolation on $\Z^d$ can be seen as a random electric network in which each open edge has unit resistance and each closed edge has infinite resistance. Grimmett, Kesten and Zhang \cite{GKZ} proved that when $d\geq 3$, the effective resistance of this network between a fixed point and infinity is a.s.~finite, thus the simple random walk on this infinite percolation cluster is a.s.~transient.
In \cite{BR}, Benjamini and Rossignol considered a different model of the cubic lattice $\Z^d$, where the resistance of each edge is an independent copy of a Bernoulli random variable. They showed that point-to-point effective resistance has submean variance in $\Z^2$, whereas the mean and the variance are of the same order when $d\geq 3$. 
The case of a complete graph on $n$ vertices has also been studied by Grimmett and Kesten \cite{GK1}. For a particular class of resistance distribution on the edges (see Theorem 3 in \cite{GK1}), as $n\to \infty$, the limit distribution of the random effective resistance between two specified vertices was identified as the sum of two i.i.d.~random variables, each with the distribution of the effective resistance between the root and infinity in a Galton--Watson tree with a supercritical Poisson offspring distribution. 

In this paper, we investigate the effective resistance and conductance in a supercritical Galton--Watson tree $\T$ rooted at $\varnothing$. Let $\textbf{p}=(p_k)_{k\geq 0}$ be the offspring distribution of $\T$, with finite mean $m>1$. 
We assume $p_0=0$ to avoid the conditioning on survival. 
Formally, every vertex in $\T$ can be represented as a finite word written with positive integers. 
The depth $|x|$ of a vertex $x$ in $\T$ is the number of edges on the unique non-self-intersecting path from the root $\varnothing$ to $x$, which also equals the length of the word representing $x$. 
Let $\T_n\colonequals \{x\in \T\colon |x|=n\}$ denote the $n$-th level of $\T$. 
We write $\overleftarrow{x}$ for the parent vertex of $x$ if $x\neq \varnothing$.
For each edge $e=\{\overleftarrow{x}, x\}$ of $\T$, we define its depth $d(e)\colonequals |x|$. Let $\nu$ be the number of children of the root, whose expected value is $m$. For $1\leq i\leq \nu$, the edge $\{\varnothing, i\}$ between the root $\varnothing$ and its child $i$ has depth $1$.
If $x$ and $y$ are vertices of $\T$, we write $x \preceq y$ if $x$ is on the non-self-intersecting path connecting $\varnothing$ and $y$. In this case, we say that $y$ is a descendant of $x$. We define $\T_n[x]\colonequals \{y\in \T_n \colon x \preceq y \}$ as the set of vertices at depth $n$ that are descendants of $x$.

If the resistance of an edge at depth $d$ equals $\lambda^{d}$ with a deterministic $\lambda>0$, Lyons \cite{Lyons1} showed that the effective resistance between the root and infinity in $\T$ is a.s.~infinite if $\lambda>m$ and a.s.~finite if $\lambda<m$. The corresponding $\lambda$-biased random walk on $\T$ is thus recurrent if $\lambda>m$, and transient if $\lambda<m$. For the critical value $\lambda=m$, we know by a subsequent work of Lyons \cite{Lyons2} that the network still has an infinite effective resistance between the root and infinity. More precisely, the critical $\lambda$-biased random walk is null recurrent provided $\sum (k\log k)p_k <\infty$. 

When the edges of $\T$ have random resistances, we are mainly interested in the similar case of critical exponential weighting: to each edge $e$ at depth $d(e)$, we assign the resistance 
\begin{equation}
\label{eq:defi-resis}
r(e) \colonequals m^{d(e)} \xi(e)\,,
\end{equation} 
where, conditionally on $\T$, $\{\xi(e) \}$ are i.i.d.~copies of a nonnegative random variable $\xi$. 
We will call $(\T, \{r(e)\})$ a branching random network of offspring distribution $\mathbf{p}$ and electric resistance $\xi$. 
For convenience, we assume that $(\T, \{r(e)\})$ and $\xi$ are independent and defined under the same probability measure $\P$. 

Let $R_n$ (resp.~$C_n$) be the effective resistance (resp.~effective conductance) between the root $\varnothing$ and the vertices at depth $n$ in $(\T, \{r(e)\})$. 
When $\T$ is a deterministic binary tree, Addario-Berry, Broutin and Lugosi \cite{ABL} showed that as $n\to \infty$,
\begin{equation*}
\E[R_n]= \E[\xi] \,n-\frac{\var[\xi]}{\E[\xi]} \log n +O(1) \quad \mbox{and} \quad \E[C_n]=\frac{1}{\E[\xi]}\,\frac{1}{n} +\frac{\var[\xi]}{\E[\xi]^3}\,\frac{\log n}{n^2}+ O(n^{-2}),
\end{equation*}
provided $\xi$ is bounded away from both zero and infinity. Their arguments are based on the concentration phenomenon of $C_n$ and $R_n$ when the underlying tree is regular. The Efron--Stein inequality is the main tool to deduce the following upper bounds on the variance 
\begin{equation*}
 \var[R_n]=O(1) \quad \mbox{and} \quad \var[C_n]=O(n^{-4}).
\end{equation*} 
A sub-Gaussian tail bound is also established for $R_n$, which gives 
\begin{equation*}
\E \Big[ |R_n-\E[R_n]|^k \Big]=O(1) \qquad \mbox{ for all }k\geq 1. 
\end{equation*}

As observed in the concluding remarks in \cite{ABL}, if the tree $\T$ is random, $C_n$ and $R_n$ are no longer concentrated. 
For any nonnegative random variable $X$, we set $\{X\}\colonequals \frac{X}{\E[X]}$ whenever $0<\E[X]<\infty$. 

\begin{theorem}
\label{thm:cv-Cn}
Assuming that $\E[\xi + \xi^{-1} +\nu^2]<\infty$, we have the almost sure convergence 
\begin{equation}
\label{eq:cv-Cn}
\{C_n\} \build{\longrightarrow}_{n\to\infty}^{} W,
\end{equation}
where $W \colonequals \lim_{n\to \infty} m^{-n}\# \T_n$.
\end{theorem}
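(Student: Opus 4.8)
The plan is to exploit the recursive self-similar structure of the tree network. By the series and parallel laws together with the scaling $r(e)=m^{d(e)}\xi(e)$, the conductance obeys the one-step recursion $C_n=\frac1m\sum_{i=1}^{\nu}\bigl(\xi_i+1/C_{n-1}^{(i)}\bigr)^{-1}$, where the $C_{n-1}^{(i)}$ are i.i.d.\ copies of $C_{n-1}$ carried by the subtrees rooted at the children of $\varnothing$, independent of $\nu$ and the $\xi_i$. Taking expectations gives the scalar recursion $\E[C_n]=\E[(\xi+1/C_{n-1})^{-1}]$; in particular $\E[C_n]$ is non-increasing, and since the weighting $\lambda=m$ is critical, Lyons' theorem \cite{Lyons2} forces the conductance to infinity to vanish, so $C_n\downarrow 0$ a.s. Iterating the recursion $k$ steps and shorting together the first $k$ levels (Rayleigh monotonicity) yields the \emph{exact} bound $C_n\le m^{-k}\sum_{x\in\T_k}C_{n-k}^{(x)}$, where $C_{n-k}^{(x)}$ are i.i.d.\ copies of $C_{n-k}$ indexed by $\T_k$ (the true subnetwork conductance from $x$ to $\T_n[x]$ being $m^{-k}C_{n-k}^{(x)}$).

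The core of the argument is a decoupling estimate: for each fixed $k$,
\[
\frac{C_n}{m^{-k}\sum_{x\in\T_k}C_{n-k}^{(x)}}\build{\longrightarrow}_{n\to\infty}^{}1\qquad\text{a.s.}
\]
The upper bound is the shorting inequality. For the matching lower bound I put unit voltage at the root and $0$ at level $n$, and observe that the load conductances $m^{-k}C_{n-k}^{(x)}$ attached at the level-$k$ vertices tend to $0$ (because $C_{n-k}^{(x)}\to0$), so the voltage drop across the finite level-$k$ network is negligible; quantitatively $C_n\ge L/(1+\rho_k L)$ with $L:=m^{-k}\sum_{x\in\T_k}C_{n-k}^{(x)}$ and $\rho_k$ the finite resistance of the level-$k$ part, whence the ratio tends to $1$ as $L\to0$. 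This decouples the first $k$ generations from the deep subtrees.

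Dividing the decoupling by $\E[C_n]$ and using $\E[C_{n-k}]/\E[C_n]\to1$ (which follows from the decay $\E[C_n]\asymp 1/n$, to be proved as a lemma extending the computation of \cite{ABL}), I get $\{C_n\}=(1+o(1))\,m^{-k}\sum_{x\in\T_k}\{C_{n-k}^{(x)}\}$. Writing $g:=\limsup_n\{C_n\}$ and $h:=\liminf_n\{C_n\}$, and noting that $G_k:=m^{-k}\sum_{x\in\T_k}g^{(x)}$ is non-decreasing in $k$ with constant expectation $\E[g]$, I deduce the exact identity $g=m^{-k}\sum_{x\in\T_k}g^{(x)}$ for every $k$ (and likewise for $h$). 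A second-moment computation then shows $m^{-k}\sum_{x\in\T_k}\bigl(g^{(x)}-\E[g]\,W^{(x)}\bigr)\to0$ in $L^2$ and a.s., where $W^{(x)}$ is the martingale limit of the subtree at $x$; since $W=m^{-k}\sum_{x\in\T_k}W^{(x)}$ this gives $g=\E[g]\,W$ and $h=\E[h]\,W$. Here I use the $L^2$-boundedness $\sup_n\E[\{C_n\}^2]<\infty$ and the Kesten--Stigum bound $\E[W^2]<\infty$ granted by $\E[\nu^2]<\infty$.

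It remains to show $\E[g]=\E[h]=1$, which yields $g=h=W$ and hence the claimed a.s.\ convergence. \textbf{This is the main obstacle.} Unlike the regular-tree case of \cite{ABL}, where $C_n$ concentrates, the limit is genuinely random, and $L^2$-control alone does not preclude persistent oscillation of $\{C_n\}/W$ between the deterministic levels $\E[h]$ and $\E[g]$ (convergence in $L^2$ or in probability is consistent with $\limsup>\liminf$). To rule this out I expect to need a quantitative fluctuation bound --- an Efron--Stein/recursive second-moment estimate, in the spirit of \cite{ABL} but adapted to the random tree --- producing a summable rate for $\E\bigl[(\{C_n\}/W-1)^2\bigr]$ and hence, via Borel--Cantelli or a quasi-martingale argument along the recursion, genuine almost sure convergence. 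Establishing this rate, together with the decay $\E[C_n]\asymp 1/n$, under only the second-moment hypotheses $\E[\xi+\xi^{-1}+\nu^2]<\infty$, is where the analysis departs most sharply from the bounded, concentrated setting of \cite{ABL}.
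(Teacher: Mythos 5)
Your decoupling step is sound (the shorting upper bound and the proportional-flow lower bound $C_n\ge L/(1+\rho_k L)$ are both valid), and the reduction to the identities $g=\E[g]\,W$ and $h=\E[h]\,W$ is a reasonable idea. But the proposal has a genuine gap, which you yourself flag as ``the main obstacle'': nothing in it shows $\E[g]=\E[h]$, i.e.\ that $\{C_n\}/W$ does not oscillate, and this is not a technical loose end but the entire content of the theorem. Moreover, the quantitative bound you propose to close it --- a summable rate for $\E\big[(\{C_n\}/W-1)^2\big]$ --- is ill-suited to the stated hypotheses: this quantity involves negative moments of $W$, and by Dubuc's theorems (cited in the paper) $\E[W^{-1}]=\infty$ as soon as $p_1m\ge 1$, a case Theorem \ref{thm:cv-Cn} allows, so even its finiteness is unclear. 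There are also secondary gaps earlier in your outline: the ``constant expectation'' argument ($g=G_0\le G_k$ with $\E[G_k]=\E[g]$ for all $k$, hence $G_k=G_0$ a.s.) requires $\E[g]<\infty$, and your second-moment step requires $\E[g^2]<\infty$; neither follows from $\sup_n\E[\{C_n\}^2]<\infty$, since the limsup of an $L^2$-bounded sequence need not be integrable. (Minor: Lyons' recurrence theorem concerns deterministic resistances $m^{d}$; for $C_n\downarrow 0$ a.s.\ one should instead combine $\E[C_n]\le\E[\xi^{-1}]/n$ with the monotonicity of $C_n$.)

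The missing idea --- and it is how the paper proceeds --- is to estimate $Y_n:=\{C_n\}-W$ directly in $L^1$, \emph{unnormalized} by $W$. From the recursion \eqref{eq:recur-xi} one gets, for $1\le k<n$,
\begin{equation*}
Y_n=\frac{1}{m^k}\sum_{|x|=k}Y_{n-k}^{(x)}+\sum_{\ell=1}^{k}\frac{1}{m^\ell}\sum_{|y|=\ell}\Pi_{n-\ell}^{(y)},
\qquad \mbox{where}\quad
\Pi_n:= C_n\Big(\frac{1}{x_{n+1}}-\frac{1}{x_n}-\frac{1}{x_{n+1}}\,\frac{\xi C_n}{1+\xi C_n}\Big)
\end{equation*}
and $x_n=\E[C_n]$. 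The first sum consists of conditionally i.i.d., mean-zero, $L^2$-bounded terms, so its $L^2$ (hence $L^1$) norm is $O(m^{-k/2})$; the error terms satisfy $\E[|\Pi_n|]\le C/n$, using exactly the moment bounds you already invoke ($x_n\asymp n^{-1}$, $\E[C_n^2]=O(n^{-2})$, $x_n/x_{n+1}=1+O(n^{-1})$). Choosing $k\asymp\log n$ gives $\E[|Y_n|]=O(\log n/n)$. This rate is not summable, but it is summable along $n_j=j^2$, so Borel--Cantelli gives $\{C_{n_j}\}\to W$ a.s., and the monotonicity of $C_n$ together with $x_{n_j}/x_{n_{j+1}}\to 1$ fills in between the $n_j$. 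In short, the fluctuation bound you anticipate does exist, but as an additive $L^1$ estimate on $\{C_n\}-W$ obtained from the recursive variance-reduction decomposition above, not as a $W$-normalized $L^2$ rate; and almost sure convergence is then extracted by a subsequence argument plus monotonicity, so no summable rate for the full sequence is ever needed.
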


We write $W_n\colonequals m^{-n}\# \T_n$. 
When $\E[\nu^2]<\infty$, it is well-known that $(W_n)_{n\geq 1}$ is an $L^2$-bounded martingale. The convergence $W_n \to W$ holds almost surely and in the $L^2$-sense. The limit $W$ is almost surely strictly positive, with 
\begin{equation*}
\E[W]=1 \quad \mbox{and} \quad  \E[W^2]=\frac{\sum k^2 p_k-m}{m(m-1)}.
\end{equation*}
Similarly, for each vertex $x\in \T$, the random variable 
\begin{equation*}
W^{(x)} \colonequals \lim_{n\to \infty} m^{|x|-n}\# \T_n[x]
\end{equation*}
has the same distribution as $W$. Using the tree notation $|x|=n$ to denote a vertex $x$ at depth~$n$, we have $W=m^{-n}\sum_{|x|=n} W^{(x)}$.

Theorem \ref{thm:cv-Cn} answers some questions mentioned at the end of \cite{ABL}. When the offspring number $\nu$ is not deterministic, it implies that the limit distribution of $\{C_n\}$ is absolutely continuous with respect to the Lebesgue measure, which is a ``scaled analogue'' of Question 4.1 in Lyons, Pemantle and Peres \cite{LPP}. For the absolute continuity of $W$, see for instance Theorem 10.4 in Chapter 1 of \cite{AN}.

For our next result, let us define 
\begin{eqnarray}
a_1 &\colonequals& m^{-2}\, \E[\nu(\nu-1)],  \label{eq:a1-constant}  \\
b_1 &\colonequals& \E[\xi], \nonumber \\
c_1 &\colonequals& \frac{a_1 b_1}{1-m^{-1}}. \label{eq:c1-constant}
\end{eqnarray}
Notice that by Theorems 22 and 23 in Dubuc \cite{D}, $\E[W^{-1}]<\infty$ if and only if $p_1\, m <1$. 

\begin{theorem}
\label{thm:Rn}
Assuming that $\E[\xi^2 + \xi^{-1} +\nu^3]<\infty$, we have 
\begin{equation}
\label{eq:cv-ncn}
\lim_{n\to \infty} n \, \E[C_n]=\frac{1}{c_1}.
\end{equation}
If additionally $p_1 \, m < 1$, then 
\begin{equation*}
\lim_{n\to \infty} \frac{\E[R_n]}{n} = c_1\, \E\big[\frac{1}{W}\big].
\end{equation*}
\end{theorem}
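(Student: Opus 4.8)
The plan is to exploit the recursive, series--parallel structure of the network. Decomposing at the root, whose $\nu$ children $1,\dots,\nu$ carry the depth-$1$ edges of resistance $m\,\xi_i$ and subtend independent copies of the network in which all resistances are rescaled by $m$, one obtains
$$
C_n=\frac1m\sum_{i=1}^{\nu}\frac{1}{\xi_i+R_{n-1}^{(i)}},
$$
where, conditionally on $\nu$, the pairs $(\xi_i,R_{n-1}^{(i)})$ are i.i.d., with $R_{n-1}^{(i)}\overset{d}{=}R_{n-1}=1/C_{n-1}$ independent of $\xi_i$. Writing $u_n:=\E[C_n]$ and taking expectations (Wald's identity, $\E[\nu]=m$) yields the clean relation
$$
u_n=\E\Big[\frac{1}{\xi+R_{n-1}}\Big]=\E\Big[\frac{C_{n-1}}{1+\xi\,C_{n-1}}\Big]=u_{n-1}-\E\Big[\frac{\xi\,C_{n-1}^2}{1+\xi\,C_{n-1}}\Big].
$$
By Rayleigh monotonicity $C_n$ is nonincreasing and tends to the (critical, hence vanishing) conductance to infinity, so $u_n\downarrow0$; the denominators above then tend to $1$, and the correction term is governed by $\E[\xi\,C_{n-1}^2]=b_1\,\E[C_{n-1}^2]$ (finiteness of the moments throughout being guaranteed by $\E[\xi^{-1}]<\infty$).

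Next I would control $v_n:=\E[C_n^2]$. Squaring the recursion and separating diagonal from off-diagonal terms gives
$$
v_n=\frac1m\,\E\Big[\frac{C_{n-1}^2}{(1+\xi C_{n-1})^2}\Big]+a_1\,u_n^2,
$$
the coefficient $a_1=m^{-2}\E[\nu(\nu-1)]$ arising from the $\nu(\nu-1)$ cross terms. Since the bracket equals $(1+o(1))\,v_{n-1}$, setting $\rho_n:=v_n/u_n^2$ turns this into the contracting affine recursion $\rho_n\approx m^{-1}\rho_{n-1}(u_{n-1}/u_n)^2+a_1$; a short bootstrap (first that $\rho_n$ stays bounded, hence $u_{n-1}-u_n=O(v_{n-1})=o(u_{n-1})$ and $u_{n-1}/u_n\to1$) forces $\rho_n\to \frac{a_1 m}{m-1}=\E[W^2]$, i.e.\ $v_n\sim\E[W^2]\,u_n^2$ and incidentally $\{C_n\}\to W$ in $L^2$. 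Feeding this into the first-moment recursion gives $u_{n-1}-u_n=(1+o(1))\,b_1\E[W^2]\,u_{n-1}^2=(1+o(1))\,c_1\,u_{n-1}^2$, using the algebraic identity $c_1=b_1\E[W^2]$ (immediate from $\E[W^2]=\E[\nu(\nu-1)]/(m(m-1))$ and the definitions of $a_1,b_1,c_1$). Setting $w_n:=1/u_n$ this reads $w_n-w_{n-1}=c_1\,(u_{n-1}/u_n)(1+o(1))\to c_1$, whence $w_n/n\to c_1$ by Ces\`aro and $n\,\E[C_n]\to 1/c_1$, proving the first assertion.

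For the second assertion I would pass to $R_n=1/C_n$ via the normalization $\{C_n\}=C_n/u_n$, which gives the exact identity $\E[R_n]=u_n^{-1}\,\E\big[1/\{C_n\}\big]$. By Theorem~\ref{thm:cv-Cn}, $\{C_n\}\to W$ almost surely with $W>0$, so $1/\{C_n\}\to 1/W$ almost surely, and since $p_1 m<1$ Dubuc's criterion gives $\E[1/W]<\infty$. Fatou already yields $\liminf_n\E[1/\{C_n\}]\ge\E[1/W]$; combined with $u_n^{-1}\sim c_1 n$ from the first part, the conclusion $\E[R_n]/n\to c_1\,\E[1/W]$ will follow once I establish the uniform integrability of $\{\,u_nR_n\,\}_n=\{\,1/\{C_n\}\,\}_n$. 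I would obtain this from a moment bound $\sup_n\E[(u_nR_n)^q]<\infty$ for some $q\in(1,2]$, equivalently $\E[R_n^q]=O(n^q)$, proved by an inductive estimate on $\|R_n\|_q$ via the recursion $R_n=m\big(\sum_{i=1}^\nu(\xi_i+R_{n-1}^{(i)})^{-1}\big)^{-1}$.

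The main obstacle is this last uniform-integrability step. The reciprocal-of-a-sum recursion for $R_n$ is nonlinear and sensitive to rare bottleneck configurations---long runs of degree-one vertices along which resistances accumulate in series with no parallel relief---which are exactly what makes $1/W$ heavy-tailed; controlling their contribution to $\E[R_n^q]$ is precisely where the hypothesis $p_1 m<1$ must enter, mirroring its role in Dubuc's criterion for $\E[1/W]<\infty$. A secondary technical point, in the first part, is making rigorous the passage from the recursions for $u_n$ and $v_n$ to their asymptotics: the $o(1)$ errors concealed in the denominators $(1+\xi C_{n-1})^{\pm1}$ and $(1+\xi C_{n-1})^{-2}$ must be bounded uniformly, which is where $\E[\xi^2]<\infty$ together with the third-moment control $\E[C_n^3]=O(u_n^3)$---and hence the hypothesis $\E[\nu^3]<\infty$---is used.
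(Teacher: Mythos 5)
Your overall architecture matches the paper's (moment recursions for the conductance, then a.s.\ convergence plus uniform integrability for the resistance), but in \emph{both} halves the step you treat as routine is exactly where the paper needs an ingredient that lives outside the recursion, namely a flow bound from Thomson's principle, and without it your argument does not close. In the first half, the gap is the ``short bootstrap'' that $\rho_n=v_n/u_n^2$ stays bounded. Your recursion $\rho_n\le m^{-1}\rho_{n-1}(u_{n-1}/u_n)^2+a_1$ contracts only when $(u_{n-1}/u_n)^2<m$, and the only handle you have on $u_{n-1}/u_n$ is $u_n\ge u_{n-1}(1-b_1\rho_{n-1}u_{n-1})$, which helps only when $\rho_{n-1}u_{n-1}$ is already small; so the induction needs a starting index $n_0$ at which $b_1 v_{n_0}/u_{n_0}$ lies below a threshold determined by $m$, and nothing you have provides one. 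The cheap a priori facts (the concavity bound $u_n\le\E[\xi^{-1}]/n$, the bound $v_n=O(n^{-2})$ of Lemma \ref{lem:moment-bd-cond}, and $u_n-u_{n+1}\le b_1v_n$) are all consistent with a scenario such as $u_n\asymp n^{-2}$, $v_n\asymp n^{-2}$ (typical values of order $n^{-2}$ plus a tail event $C_n\asymp n$ of probability $n^{-4}$), in which $v_n/u_n$ never becomes small and the bootstrap never starts. What is missing is an a priori \emph{lower} bound $\E[C_n]\ge c/n$; the paper proves exactly this as Lemma \ref{lem:lower-bd-cond}, not from the recursion but from Thomson's principle applied to the uniform unit flow $\Theta_{\mathsf{unif}}(\{\overleftarrow{x},x\})=m^{-|x|}W^{(x)}/W$, together with the Pemantle--Peres law of large numbers and Fatou. (Your later claim $\E[C_n^3]=O(u_n^3)$ silently uses the same lower bound: Lemma \ref{lem:moment-bd-cond} gives $O(n^{-3})$, which equals $O(u_n^3)$ only once $u_n\gtrsim 1/n$ is known.) Granting that lemma, your $\rho_n$-normalization does go through and is essentially the computation of Section \ref{sec:expect-cn}.

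In the second half, your reduction (a.s.\ convergence $R_n/n\to c_1/W$ from Theorem \ref{thm:cv-Cn} and the first half, Dubuc's criterion for $\E[1/W]<\infty$, then uniform integrability of $R_n/n$) is precisely the paper's plan, but the uniform integrability---which you yourself identify as the main obstacle---is the entire content of this half, and the route you sketch (inductive $L^q$ estimates through the nonlinear recursion $R_n=m\bigl(\sum_{i}(\xi_i+R_{n-1}^{(i)})^{-1}\bigr)^{-1}$) is not carried out and is unlikely to work: a single step of the recursion does not see the global bottleneck events responsible for the heavy tail of $R_n$, and there is no evident way to propagate $\|R_n\|_q=O(n)$ through a reciprocal of a sum of reciprocals while making the hypothesis $p_1m<1$ act. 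The paper abandons the recursion here entirely: by Thomson's principle one has the pathwise bound $R_n\le\sum_{k=1}^n\sum_{|x|=k}m^{-k}\xi_x\,(W^{(x)}/W)^2$ of \eqref{eq:upperRn}, which decouples $R_n$ from the recursion and reduces uniform integrability to moment estimates for an explicit functional of the branching martingale limits; these are then handled via $\E[W^{-\alpha}]<\infty$ for some $\alpha>1$ (Dubuc---this is where $p_1m<1$ enters), $\sup_k\E[(W_k)^{-\alpha}]<\infty$ by Jensen, $\E[W^{2s}]<\infty$ (Bingham--Doney), and a Gamma-integral representation of $(W^{(x)}/W)^{2s}$ conditionally on $\mathscr{F}_k$ combined with Laplace-transform estimates for $W$. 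This flow bound is the single key idea missing from your proposal, and it is needed twice.
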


If $p_1 \,m \geq 1$, by Fatou's lemma, we deduce from \eqref{eq:cv-Cn} and \eqref{eq:cv-ncn} that 
\begin{equation*}
\liminf_{n \to \infty} \frac{\E[R_n]}{n}=\infty .
\end{equation*} 
See also the remark at the end of Section \ref{sec:bounds}.

To state a more precise asymptotic expansion for $\E[C_n]$, we define 
\begin{eqnarray}
a_2 &\colonequals& m^{-3}\, \E\big[\nu(\nu-1)(\nu-2)\ind{\{\nu \geq 2\}}\big],    \label{eq:a2-constant}  \\
b_2 &\colonequals& \E\big[\xi^2\big], \nonumber \\
c_2 &\colonequals& (1-m^{-2})^{-1} \Big(\frac{3 a_1 ^2}{m -1}+a_2 \Big), \label{eq:c2-constant} \\
c_3 &\colonequals& \frac{2 a_1 c_1}{m-1} -\frac{2 b_1 c_2}{m}, \label{eq:c3-constant} \\
c_4 &\colonequals& \frac{b_1}{1-m^{-1}} \Big( \frac{c_3}{c_1}+a_1\Big)  -b_2 \frac{c_2}{c_1}. \label{eq:c4-constant}
\end{eqnarray}
If $\nu=m\geq 2$ is deterministic, 
\begin{equation*}
c_1 = b_1=\E[\xi], \quad c_2=1, \quad c_3=0 \quad \mbox{ and }\quad  c_4=b_1-\frac{b_2}{b_1}= -\frac{\var[\xi]}{\E[\xi]}.
\end{equation*}

\begin{theorem}
\label{thm:expect-cn}
Assume that $\E[\xi^3 + \xi^{-1} +\nu^4]<\infty$. Then there exists a constant $c_0\in \R$ such that, as $n\to \infty$, 
\begin{equation*}
\E[C_n] = \frac{1}{c_1 n} - \frac{c_4}{c_1^2} \frac{\log n}{n^2} - \frac{c_0}{c_1^2} \frac{1}{n^2} + O(\frac{(\log n)^2}{n^3}).
\end{equation*} 
\end{theorem}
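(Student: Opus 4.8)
The starting point is the series--parallel reduction of the network. Conditionally on the root degree $\nu$, the tree splits into $\nu$ independent subtrees, each joined to $\varnothing$ by an edge of resistance $m\,\xi_i$; since an edge at depth $d$ inside the $i$-th subtree carries resistance $m^{d}\xi=m\cdot m^{d-1}\xi$, the conductance from the $i$-th child down to level $n$ is $\frac1m$ times an independent copy of $C_{n-1}$. Putting the root edge in series and the $\nu$ branches in parallel yields the recursive identity
\begin{equation*}
C_n \stackrel{d}{=} \sum_{i=1}^{\nu}\frac1m\,\frac{C_{n-1}^{(i)}}{1+\xi_i\,C_{n-1}^{(i)}},
\end{equation*}
where $\big(C_{n-1}^{(i)},\xi_i\big)_{i\ge1}$ are i.i.d., independent of $\nu$, with $C_{n-1}^{(i)}\stackrel{d}{=}C_{n-1}$ independent of $\xi_i\stackrel{d}{=}\xi$. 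Writing $f\colonequals \frac1m\frac{C_{n-1}}{1+\xi C_{n-1}}$ and taking expectations of $C_n,\ C_n^2,\ C_n^3$, the branching property closes this into a system for the first three moments, whose coefficients are the factorial moments $m,\ \E[\nu(\nu-1)],\ \E[\nu(\nu-1)(\nu-2)]$, i.e.\ $a_1,a_2$.

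Because $\E[C_n]\to0$, the plan is to Taylor-expand $c\mapsto\frac{c}{1+\xi c}$, $\frac{c^2}{(1+\xi c)^2}$, $\frac{c^3}{(1+\xi c)^3}$ in powers of $c$, integrate out $\xi$ using $b_1=\E[\xi]$, $b_2=\E[\xi^2]$, and bound the Taylor remainders with $\E[\xi^3]<\infty$ together with a priori moment estimates $\E[C_n^j]=O(n^{-j})$ for $j\le4$ (this is where $\E[\nu^4]<\infty$ and $\E[\xi^{-1}]<\infty$ enter). This produces three perturbed-linear recursions of the schematic form
\begin{align*}
\E[C_n]&=\E[C_{n-1}]-b_1\E[C_{n-1}^2]+b_2\E[C_{n-1}^3]+O(n^{-4}),\\
\E[C_n^2]&=\tfrac1m\E[C_{n-1}^2]-\tfrac{2b_1}{m}\E[C_{n-1}^3]+a_1\E[C_n]^2+O(n^{-4}),\\
\E[C_n^3]&=\tfrac1{m^2}\E[C_{n-1}^3]+\tfrac{3a_1}{m}\E[C_n]\,\E[C_{n-1}^2]+a_2\E[C_n]^3+O(n^{-4}).
\end{align*}
The two latter recursions are \emph{contracting} (factors $1/m$ and $1/m^2$), so starting from the known $\E[C_n]\sim\frac1{c_1n}$ of Theorem~\ref{thm:Rn} one solves them asymptotically: the source $a_1\E[C_n]^2\sim a_1c_1^{-2}n^{-2}$ forces $\E[C_n^2]\sim\frac1{b_1c_1}\,n^{-2}$, and then $\E[C_n^3]\sim\frac{c_2}{c_1^3}\,n^{-3}$ (this is precisely where the combination $\frac{3a_1^2}{m-1}+a_2$, i.e.\ $c_2$, materialises). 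Pushing the same solutions one order further gives the sub-leading expansion of $\E[C_n^2]$: a $\frac{\log n}{n^3}$ term, whose coefficient is slaved to the logarithmic coefficient of $\E[C_n]$ through the source $a_1\E[C_n]^2$, and an $n^{-3}$ term governed by $c_3$.

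The decisive step converts these into the second-order behaviour of $\E[C_n]$ itself. The first recursion is \emph{marginal} (contraction factor $1$), which is exactly the neutral-fixed-point mechanism that generates the logarithm. I would study $\rho_n\colonequals 1/\E[C_n]$, for which
\begin{equation*}
\rho_n-\rho_{n-1}=\frac{b_1\E[C_{n-1}^2]-b_2\E[C_{n-1}^3]+O(n^{-4})}{\E[C_{n-1}]\,\E[C_n]}\,.
\end{equation*}
Inserting the expansions above and expanding the quotient, the leading term is the constant $c_1$, the $\frac{\log n}{n}$ contributions cancel identically (a cancellation forced by the very definition $c_1=\frac{a_1b_1}{1-m^{-1}}$), and the surviving $\frac1n$ coefficient assembles into $c_4$ in the stated closed form; summing gives $\rho_n=c_1n+c_4\log n+c_0+o(1)$, and inverting yields the theorem, the $O\!\big(\tfrac{(\log n)^2}{n^3}\big)$ error arising from the quadratic term of the geometric series for $1/\rho_n$.

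The main obstacle is exactly this marginal, resonant step. Because the leading logarithmic self-consistency is a tautology, $c_4$ is invisible at leading order and is pinned down only by the full sub-leading ($n^{-3}$, including the logarithmic correction) asymptotics of $\E[C_n^2]$ and the leading $n^{-3}$ term of $\E[C_n^3]$. The delicate point is that the $n^{-3}$ coefficient of $\E[C_n^2]$ (hence $c_3$) itself depends, through the source $a_1\E[C_n]^2$, on the a priori unknown $n^{-2}$ coefficient of $\E[C_n]$; one must verify that this unknown cancels out of the final expression for $c_4$, leaving a formula in $a_1,a_2,b_1,b_2,m$ alone. By contrast, the existence of $c_0$ needs no such cancellation, only convergence of the remainder series $\sum_k O\!\big((\log k)^2/k^2\big)$, which is why the statement gives $c_4$ explicitly but asserts $c_0$ abstractly. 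The remaining inputs are the uniform bounds $\E[C_n^j]=O(n^{-j})$ and Euler--Maclaurin estimates for $\sum_{k>n}k^{-2}$, $\sum_{k>n}k^{-3}$ and $\sum_{k>n}k^{-3}\log k$.
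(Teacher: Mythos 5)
Your proposal follows the same route as the paper: the recurrence \eqref{eq:recur-xi}, Taylor expansion of $c\mapsto c/(1+\xi c)$ to close a system for $x_n=\E[C_n]$, $y_n=\E[C_n^2]$, $z_n=\E[C_n^3]$ (your three schematic recursions are exactly \eqref{eq:xn}--\eqref{eq:zn}), the a priori bounds $\E[C_n^j]=O(n^{-j})$, the contraction structure for $y_n,z_n$ versus the marginal recursion for $x_n$, the passage to $\rho_n=1/x_n$, and summation of increments to produce $c_1n+c_4\log n+c_0+o(1)$. Your intermediate limits $y_n\sim (b_1c_1n^2)^{-1}$, $z_n\sim c_2/(c_1n)^3$ and the final inversion error $O((\log n)^2/n^3)$ all agree with the paper.

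The one place the sketch is genuinely incomplete is exactly the point you flag as delicate. You propose to expand $y_n$ and $z_n$ in powers of $n$ (with logarithms) and substitute into the quotient for $\rho_n-\rho_{n-1}$; but the coefficients of those expansions involve the still-unknown $\log n/n^2$ and $n^{-2}$ coefficients of $x_n$, so you must both justify that such expansions of $x_n,y_n,z_n$ exist at all and verify that the unknown coefficients cancel between numerator and denominator. The paper never faces either issue, because it divides out the common factor \emph{before} expanding: it tracks the ratios $y_n/x_n^2$ and $z_n/x_n^3$ together with the increments $\delta_n\colonequals \frac{y_{n+1}}{x_{n+1}^2}-\frac{y_n}{m x_n^2}-a_1$ and $\varepsilon_n\colonequals\frac{1}{x_{n+1}}-\frac{1}{x_n}-c_1$. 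Dividing \eqref{eq:yn} and \eqref{eq:zn} by $x_{n+1}^2$ and $x_{n+1}^3$ turns them into contracting recursions whose source terms are controlled using only $x_n=\frac{1}{c_1n}+O(\log n/n^2)$ and the ratio bound \eqref{eq:ratio-xn} (i.e.\ $x_{n-i}/x_n\leq 1+C'i/n$, with no logarithms); this gives $\frac{y_n}{x_n^2}=\frac{a_1}{1-m^{-1}}+\frac{c_3}{c_1(1-m^{-1})}\frac{1}{n}+O(n^{-2}\log n)$ and $\frac{z_n}{x_n^3}=c_2+O(n^{-1})$ with no undetermined constant ever entering, whence $\varepsilon_n=\frac{c_4}{n}+O(n^{-2}\log n)$ drops out of \eqref{eq:epsilon} directly. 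Your cancellation claim is in fact true, and for the reason you give (it reduces to $c_1=\frac{a_1b_1}{1-m^{-1}}$), but in the ratio formulation it never needs to be checked. The existence of $c_0$ then follows, as you say, from absolute convergence of $\sum_i(\varepsilon_i-\frac{c_4}{i})$; note that the summands are $O(i^{-2}\log i)$, so the $o(1)$ term in $\rho_n$ is actually $O(\log n/n)$, which is what keeps the inversion error at $O((\log n)^2/n^3)$. In short: right skeleton and right constants, but to make the decisive marginal step rigorous you should replace coefficient-matching in assumed expansions by the paper's ratio/increment bookkeeping, or else supply the existence-plus-cancellation argument that your version presupposes.
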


The constant $c_0$ appearing in the expansion above will be defined at the end of Section~\ref{sec:expect-cn}, but its explicit value is unknown to us. 

To further describe the rate of convergence in \eqref{eq:cv-Cn}, we write $\xi_x \colonequals \xi(\{\overleftarrow{x},x\})$ for every vertex $x\neq \varnothing$. Remark that, conditioning on the first $\ell$ levels of the tree $\T$, the random variables $W^{(x)}, |x|=\ell$ are i.i.d.~and independent of $\xi_x, |x|=\ell$. Notice that $W^{(x)} (1 -  \frac{\xi_x}{c_1} \, W^{(x)})$ is of zero mean, because $c_1=\E[\xi]\,\E[W^2]$. When $\E[\xi^2 + \nu^4]<\infty$, one can easily verify that 
\begin{equation*}
\sum_{\ell=1}^\infty \frac{1}{m^\ell} \, \sum_{|x|=\ell}  W^{(x)} \,  \Big(1-\frac{\xi_x}{c_1} \, W^{(x)}\Big)
\end{equation*}
converges in $L^2$. 

\begin{theorem} 
\label{thm:cv-rate}
Assuming that $\E[\xi^3 + \xi^{-1} +\nu^4]<\infty$, we have 
\begin{equation*}
n \big( \{C_n\} - W\big) \, \build{\longrightarrow}_{n \to \infty}^{(\mathrm{P})}\, \sum_{\ell=1}^\infty \frac{1}{m^\ell} \, \sum_{|x|=\ell}  W^{(x)} \,  \Big( 1  -  \frac{\xi_x}{c_1} \, W^{(x)}\Big),
\end{equation*}
and, with the same constant $c_0$ in Theorem \ref{thm:expect-cn}, 
\begin{equation}
\label{eq:rn}
R_n - \Bigg( \frac{c_1}{W} n + \frac{c_4}{W} \log n + \frac{1}{W} \bigg(c_0 -  \frac{1}{W} \sum_{\ell=1}^\infty \frac{1}{m^\ell} \, \sum_{|x|=\ell}  W^{(x)} \,  \Big( c_1  -  \xi_x \, W^{(x)}\Big)\bigg) \Bigg)  \, \build{\longrightarrow}_{n \to \infty}^{(\mathrm{P})}\, 0,
\end{equation}
where $\build{\longrightarrow}_{}^{(\mathrm{P})}$ indicates convergence in probability.
\end{theorem}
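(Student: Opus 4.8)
The plan is to establish the statement for the conductance first and to deduce the one for $R_n$ by inversion. Throughout set $\phi(n)\colonequals\E[C_n]$ and $Z_n\colonequals\{C_n\}=C_n/\phi(n)$, and write $L$ for the $L^2$-limit of the series on the right-hand side, with general term $T_\ell\colonequals m^{-\ell}\sum_{|x|=\ell}W^{(x)}\bigl(1-\tfrac{\xi_x}{c_1}W^{(x)}\bigr)$; recall $\E[T_\ell]=0$ because $c_1=\E[\xi]\,\E[W^2]$. The first task is to prove $n(Z_n-W)\to L$ in probability. The starting point is the one-step series--parallel reduction across the root: denoting by $\nu$ the number of children of $\varnothing$, by $\xi_i$ the resistance variable on the edge $\{\varnothing,i\}$, and by $C_{n-1}^{(i)}$ independent copies of $C_{n-1}$ carried by the subtrees of the children (each a branching random network in its own right, by the scaling \eqref{eq:defi-resis}), the network laws give
\begin{equation*}
C_n=\frac1m\sum_{i=1}^{\nu}\frac{1}{\xi_i+1/C_{n-1}^{(i)}}=\frac1m\sum_{i=1}^{\nu}\Bigl(C_{n-1}^{(i)}-\xi_i\,\bigl(C_{n-1}^{(i)}\bigr)^2+r_{n,i}\Bigr),
\end{equation*}
where the prefactor $m^{-1}$ comes from the depth-one resistances $m\,\xi_i$ and the Taylor remainder obeys $0\le r_{n,i}\le\xi_i^2(C_{n-1}^{(i)})^3$. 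The expansion is justified because $C_{n-1}^{(i)}\to0$ at rate $n^{-1}$; the uniform integrable bound $C_n\le\frac1m\sum_{i=1}^{\nu}\xi_i^{-1}$, furnished by $\E[\xi^{-1}]<\infty$, controls the exceptional contributions, and the rescaled remainder is shown to be $o(1)$ in $n(Z_n-W)$ using $\E[\xi^3+\nu^4]<\infty$.

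Dividing by $\phi(n)$ and invoking $\E[\nu]=m$, the identity $W=\tfrac1m\sum_{i=1}^{\nu}W^{(i)}$, Theorem~\ref{thm:cv-Cn} (so that $Z_{n-1}^{(i)}\to W^{(i)}$ a.s.) and the asymptotics of $\phi$ from Theorems~\ref{thm:Rn}--\ref{thm:expect-cn}, I would extract a propagation identity
\begin{equation*}
n\,(Z_n-W)=T_1^{(n)}+\rho_n\,\frac1m\sum_{i=1}^{\nu}(n-1)\bigl(Z_{n-1}^{(i)}-W^{(i)}\bigr)+\mathcal E_n,
\end{equation*}
where $\rho_n=\tfrac{n}{n-1}\tfrac{\phi(n-1)}{\phi(n)}\to1$, the error $\mathcal E_n$ (collecting the rescaled remainders) tends to $0$, and $T_1^{(n)}=n\bigl(\tfrac{\phi(n-1)}{\phi(n)}-1\bigr)W-n\tfrac{\phi(n-1)^2}{\phi(n)}\tfrac1m\sum_{i}\xi_i(Z_{n-1}^{(i)})^2$. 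Using $n\bigl(\tfrac{\phi(n-1)}{\phi(n)}-1\bigr)\to1$ and $n\tfrac{\phi(n-1)^2}{\phi(n)}\to c_1^{-1}$, one gets $T_1^{(n)}\to W-c_1^{-1}\tfrac1m\sum_i\xi_i(W^{(i)})^2=T_1$, the $\ell=1$ term of the series.

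The core of the argument is to iterate this relation $k$ times, peeling off the first $k$ levels: since each subtree rooted at level $k$ is again a branching random network, the same identity applied inside each subtree yields
\begin{equation*}
n\,(Z_n-W)=\sum_{\ell=1}^{k}T_\ell^{(n)}+\rho_{n,k}\,\frac{1}{m^{k}}\sum_{|x|=k}(n-k)\bigl(Z^{[x]}_{n-k}-W^{(x)}\bigr)+\mathcal E_{n,k},
\end{equation*}
with $\rho_{n,k}\to1$, $\mathcal E_{n,k}\to0$, each $T_\ell^{(n)}\to T_\ell$, and $Z^{[x]}_{n-k}$ the scaled subtree conductance. Writing $L^{(x)}$ for the series attached to the subtree rooted at $x$, the remainder sum is the exact analogue of $n(Z_n-W)$ one level down, and by the identity $\tfrac{1}{m^{k}}\sum_{|x|=k}L^{(x)}=\sum_{\ell>k}T_\ell$ its natural limit is the tail of the series. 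To turn this into convergence in probability I would argue by a double limit: for fixed $k$, let $n\to\infty$ so that $\sum_{\ell\le k}T_\ell^{(n)}\to\sum_{\ell\le k}T_\ell$; then let $k\to\infty$, using the $L^2$-convergence of $\sum_\ell T_\ell$ to kill the tail. The delicate point, where I expect the real work to concentrate, is to control the remainder uniformly in $n$: namely an a priori bound $\sup_n\E\bigl[n^2(Z_n-W)^2\bigr]<\infty$ and, from it, an estimate showing $\sup_n\|\mathcal R_{n,k}\|_2\to0$ as $k\to\infty$, where $\mathcal R_{n,k}$ is the level-$k$ remainder. This is exactly where conditioning on the first $k$ levels (which makes the subtree contributions conditionally i.i.d.), the mean-zero structure of the $T_\ell$, and the hypotheses $\E[\xi^3+\xi^{-1}+\nu^4]<\infty$ enter; the algebraic identifications above are comparatively routine.

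Granting the conductance limit, the statement for $R_n$ follows by inversion. Write $R_n=1/C_n=\phi(n)^{-1}Z_n^{-1}$. Theorem~\ref{thm:expect-cn} gives $\phi(n)^{-1}=c_1 n+c_4\log n+c_0+o(1)$, while $Z_n\to W>0$ a.s. together with $n(Z_n-W)\to L$ yield, by the expansion $Z_n^{-1}=W^{-1}-(Z_n-W)/(WZ_n)$,
\begin{equation*}
\frac{1}{Z_n}=\frac1W-\frac{L}{nW^2}+o\!\Bigl(\frac1n\Bigr)\qquad\text{in probability}.
\end{equation*}
Multiplying the two expansions and retaining the terms of orders $n$, $\log n$ and $1$ (the cross terms being $O(\tfrac{\log n}{n})$ or $o(1)$) gives $R_n=\tfrac{c_1}{W}n+\tfrac{c_4}{W}\log n+\tfrac{c_0}{W}-\tfrac{c_1 L}{W^2}+o(1)$ in probability. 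Since $c_1L=\sum_{\ell\ge1}m^{-\ell}\sum_{|x|=\ell}W^{(x)}\bigl(c_1-\xi_x W^{(x)}\bigr)$, the constant term is precisely $\tfrac1W\bigl(c_0-\tfrac1W\sum_{\ell\ge1}m^{-\ell}\sum_{|x|=\ell}W^{(x)}(c_1-\xi_x W^{(x)})\bigr)$, which is \eqref{eq:rn}. The only care needed here is that the $o(1/n)$ error in $Z_n^{-1}$ survives multiplication by $c_1n+c_4\log n+c_0$, which is immediate once the conductance limit holds in probability.
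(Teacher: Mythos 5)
Your skeleton is the same as the paper's: peel the network apart level by level, identify the per-level limits (your $T_\ell^{(n)}\to T_\ell$ is exactly the paper's $n\,\Pi_{n-\ell}^{(y)}\to W^{(y)}(1-\tfrac{\xi_y}{c_1}W^{(y)})$, since your Taylor expansion is the paper's identity $\tfrac{1}{1+u}=1-u+\tfrac{u^2}{1+u}$ written with a remainder), control the level-$k$ remainder, and then invert the conductance expansion against Theorem \ref{thm:expect-cn} to get \eqref{eq:rn}; that last inversion is correct and coincides with the paper. The genuine gap is the point you yourself flag: your double-limit scheme (fix $k$, let $n\to\infty$, then $k\to\infty$) hinges on the a priori bound $\sup_n n^2\,\E\big[(\{C_n\}-W)^2\big]<\infty$, and nothing in your write-up produces it --- it is the quantitative heart of the statement, not a routine estimate, so as written the argument cannot close. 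The paper never needs such a bound, and this is its one key trick: it lets $k$ grow with $n$, taking $k_n=\frac{4}{\log m}\log n$, so that by the conditionally i.i.d.\ mean-zero structure \eqref{eq:Y-L2} the level-$k_n$ remainder has $L^2$ norm at most $\sqrt{C'}\,m^{-k_n/2}=\sqrt{C'}\,n^{-2}$, which survives multiplication by $n$ using only the trivial bound $\sup_n\E[Y_n^2]<\infty$. The price is that one must then handle a growing number of levels of $\Pi$-sums, which the paper does through the estimates $\E[\Pi_n^2]=O(n^{-2})$ and $\E[\Pi_n]=O(n^{-3})$, the latter requiring the third-order expansion of $\E\big[\tfrac{C_n}{1+\xi C_n}\big]$ and hence $\E[\xi^3+\nu^4]<\infty$.

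Your route can in fact be repaired without importing that trick: the one-step version of your own propagation identity, combined with the same conditional variance computation, gives $\|nY_n\|_{L^2}\le \rho_n\, m^{-1/2}\,\|(n-1)Y_{n-1}\|_{L^2}+B$ with $B\colonequals\sup_n\big(\|T_1^{(n)}\|_{L^2}+\|\mathcal E_n\|_{L^2}\big)$, and since $m^{-1/2}<1$ and $\rho_n\to 1$, induction yields the missing bound. But making $B<\infty$ honest exposes a second, subtler flaw in your write-up: squaring your remainder bound $r_{n,i}\le\xi_i^2\,(C_{n-1}^{(i)})^3$ produces $\xi^4 C^6$, whose expectation is \emph{not} controlled under the standing hypothesis $\E[\xi^3+\xi^{-1}+\nu^4]<\infty$ (Lemma \ref{lem:moment-bd-cond} stops at the fourth moment of $C_n$, and no fourth moment of $\xi$ is assumed). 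You must keep the denominator and use $r_{n,i}=\xi_i^2(C_{n-1}^{(i)})^3/(1+\xi_iC_{n-1}^{(i)})\le \xi_i\,(C_{n-1}^{(i)})^2$, whence $r_{n,i}^2\le \xi_i^2\,(C_{n-1}^{(i)})^4$ and only $\E[\xi^2]\,\E[C_{n-1}^4]=O(n^{-4})$ is needed --- this is precisely why the paper works with the exact quantity $\Pi_n$ rather than a one-sided Taylor bound. With these two repairs (the $L^2$ recursion for the a priori bound, and the corrected remainder moment) your argument goes through; the $R_n$ part needs no change.
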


The rest of the paper is organized as follows. 
In the next section, we recall Thomson's principle for the effective resistance, and we derive the recurrence relation for $C_n$. In Section \ref{sec:bounds}, we collect some estimates on the moments of $C_n$. The convergence \eqref{eq:cv-ncn} and Theorem \ref{thm:expect-cn} will be shown in Section \ref{sec:expect-cn} by analyzing the recurrence equations on the moments of $C_n$. Similar arguments have already been used in the proof of Theorem 5 in \cite{ABL}. By second moment calculations, we establish Theorems \ref{thm:cv-Cn} and \ref{thm:cv-rate} in Section \ref{sec:as-cv}, and, by proving the uniform integrability of $(n^{-1}R_n)_{n \geq 1}$, we complete the proof of Theorem \ref{thm:Rn} in Section \ref{sec:expected-rn}. Finally, in Section \ref{sec:lambda} we briefly discuss the case when we change the scaling by assigning to each edge $e$ in $\T$ the resistance $\lambda^{d(e)}\xi(e)$ with $\lambda>m$.

\section{Preliminaries}

Consider a general network $G=(V,E)$ with the resistances $\{r(e)\}$. For $x,y\in V$, we write $x \sim y$ to indicate that $\{x,y\}$ belongs to $E$. 
To each edge $e=\{x,y\}$, one may associate two directed edges $\overrightarrow{xy}$ and $\overrightarrow{yx}$. We shall denote by $\overrightarrow{E}$ the set of all directed edges. A flow $\theta$ is a function on $\overrightarrow{E}$ that is antisymmetric, meaning that $\theta(\overrightarrow{xy})=-\theta(\overrightarrow{yx})$. The divergence of $\theta$ at a vertex $x$ is defined by 
\begin{equation*}
\mathrm{div}\, \theta (x) \colonequals \sum_{y\colon y\sim x} \theta(\overrightarrow{xy}).
\end{equation*}
Let $A$ and $Z$ be two disjoint non-empty subsets of $V$: $A$ will represent the source of the network and $Z$ the sink. The flow $\theta$ is from $A$ to $Z$ with strength $\|\theta\|$ if it satisfies Kirchhoff's node law that $\mathrm{div}\, \theta (x)=0$ for all $x\notin A\cup Z$, and that
\begin{equation*}
\|\theta\|=\sum_{a\in A} \sum_{y\sim a, y\notin A} \theta(\overrightarrow{ay})=\sum_{z\in Z} \sum_{y\sim z, y\notin Z} \theta(\overrightarrow{yz}).
\end{equation*}
The effective resistance between $A$ and $Z$ can be defined as 
\begin{equation}
\label{eq:thomson}
R(A \leftrightarrow Z) \colonequals \inf_{\|\theta\|=1} \sum_{e\in E} r(e) \theta(e)^2,
\end{equation}
where the infimum is taken over all flows $\theta$ from $A$ to $Z$ with unit strength. The infimum is always attained at what is called the unit current flow, which satisfies, in addition to the node law, Kirchhoff's cycle law. 
This flow-based formulation of the effective resistance is also called Thomson's principle. 
The effective conductance $C(A \leftrightarrow Z)$ between $A$ and $Z$ is the reciprocal $R(A \leftrightarrow Z)^{-1}$.

Conditionally on the branching random network $(\T,\{r(e)\})$, let $X$ be the associated random walk on the tree $\T$. Let $\omega(x, y), x\sim y$ denote the transition probabilities of $X$, and let $\pi(x), x\in \T$ denote the reversible measure. Writing the conductances $c(e)=1/r(e)$, we have
\begin{equation*}
\pi(x)=\sum_{y\colon y\sim x} c(\{x,y\}) \quad \mbox{and} \quad \omega(x, y)=\frac{c(\{x,y\})}{\pi(x)}.
\end{equation*}
We suppose that the random walk $X$ starts from the vertex $x$ at time 0 under the probability measure $P_{x, \omega}$. 
As a probabilistic interpretation, the effective conductance $C_n \colonequals C(\{\varnothing\}\leftrightarrow \T_n)$ between the root and the level set $\{x\in \T\colon |x|=n\}$ satisfies
\begin{equation*}
C_n = \pi(\varnothing) P_{\varnothing, \omega} \left(\tau_n < T_\varnothing^+ \right),
\end{equation*}
where $\tau_n\colonequals \inf\{k\geq 0\colon  |X_k|=n\}$ and $T^+_\varnothing \colonequals\inf\{k\geq 1\colon X_k=\varnothing \}$.
We see immediately that $C_n\geq C_{n+1}$. 

For $1\leq i\leq \nu$, let $C_{n+1, i}\colonequals C(\{i\} \leftrightarrow \T_{n+1}[i])$ denote the effective conductance between the vertex $i$ and $\T_{n+1}[i]$. We also set $\eta_i\colonequals \xi(\{\varnothing, i\})^{-1}$, $1\leq i\leq \nu$, which are i.i.d., independent of $\nu$. Observe that conditioning on $\nu$, $(C_{n+1, i})_{1\leq i\leq \nu}$ are i.i.d., independent of~$\eta_{i}$, and distributed as $\frac{C_n}{m}$.
Using the series and parallel law of electric networks, we obtain the recurrence relation that for $n\geq 1$, 
\begin{equation}
\label{eq:recur}
C_{n+1}= \sum_{i=1}^\nu \Big( \frac{m}{\eta_i}+\frac{1}{C_{n+1, i}}\Big)^{-1}= \frac1{m} \, \sum_{i=1}^\nu \frac{\eta_i C_{n}^{(i)}}{\eta_i + C_{n}^{(i)}}  \,,
\end{equation}
where for $1\leq i\leq \nu$, $C_{n}^{(i)}\colonequals m\, C_{n+1, i}$ are i.i.d.~copies of $C_{n}$, independent of $(\eta_i)_{1\leq i\leq \nu}$. It is clear that $C_1=m^{-1}\sum_{i=1}^\nu \eta_i$. 
If we set $\xi_i\colonequals \xi(\{\varnothing, i\})= \eta_i^{-1}$ for $1\leq i\leq \nu$, the recurrence equation \eqref{eq:recur} can also be written as 
\begin{equation}
\label{eq:recur-xi}
C_{n+1}= \frac1{m} \, \sum_{i=1}^\nu \frac{C_{n}^{(i)}}{1 + \xi_i C_{n}^{(i)}} \,.
\end{equation}

\section{Bounds on the expected conductance}
\label{sec:bounds}

Let $\eta$ denote the reciprocal $\xi^{-1}$. 

\begin{lemma}
\label{lem:upper-bd-cond}
If $\E[\eta]=\E[\xi^{-1}]<\infty$, then $\E[C_n]\leq \frac{\E[\eta]}{n}$ for all $n\geq 1$. 
\end{lemma}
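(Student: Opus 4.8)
The plan is to argue by induction on $n$, exploiting the recurrence \eqref{eq:recur} together with the concavity of the harmonic mean.

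For the \textbf{base case} $n=1$, recall that $C_1 = m^{-1}\sum_{i=1}^\nu \eta_i$. Since the $\eta_i$ are i.i.d.\ copies of $\eta$, independent of $\nu$, and $\E[\nu]=m$, Wald's identity gives $\E[C_1] = m^{-1}\E[\nu]\,\E[\eta] = \E[\eta]$, so the claimed bound holds with equality at $n=1$. I would also record at the outset that $C_n \leq C_1$ for every $n$ (because $C_n \geq C_{n+1}$), whence $\E[C_n] \leq \E[\eta] < \infty$; this guarantees that all the expectations manipulated below are finite.

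For the \textbf{inductive step}, I would first simplify the recurrence. Conditioning on $\nu$ in \eqref{eq:recur}, the $\nu$ summands $\frac{\eta_i C_n^{(i)}}{\eta_i + C_n^{(i)}}$ are i.i.d.\ and distributed as $\frac{\eta\, C_n}{\eta + C_n}$ with $\eta$ and $C_n$ independent; since the prefactor $m^{-1}$ cancels against $\E[\nu]=m$, this yields the clean identity
\begin{equation*}
\E[C_{n+1}] = \E\Big[\frac{\eta\, C_n}{\eta + C_n}\Big].
\end{equation*}
The crux is then to apply Jensen's inequality to the function $\phi(x,y) = \frac{xy}{x+y}$ on $(0,\infty)^2$, which is jointly concave: its Hessian equals $\frac{2}{(x+y)^3}\left(\begin{smallmatrix} -y^2 & xy \\ xy & -x^2 \end{smallmatrix}\right)$, which is negative semidefinite (the $2\times 2$ determinant vanishes and the diagonal entries are nonpositive). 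Applying the multivariate Jensen inequality to the random vector $(\eta, C_n)$ gives $\E[C_{n+1}] \leq \phi(\E[\eta], \E[C_n]) = \frac{\E[\eta]\,\E[C_n]}{\E[\eta] + \E[C_n]}$.

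It then remains to feed in the induction hypothesis. The map $t\mapsto \frac{\E[\eta]\,t}{\E[\eta]+t}$ is increasing on $(0,\infty)$, so substituting $\E[C_n]\leq \E[\eta]/n$ yields
\begin{equation*}
\E[C_{n+1}] \leq \frac{\E[\eta]\cdot \E[\eta]/n}{\E[\eta] + \E[\eta]/n} = \frac{\E[\eta]/n}{1 + 1/n} = \frac{\E[\eta]}{n+1},
\end{equation*}
which closes the induction. I do not anticipate a genuine obstacle: the only points requiring care are the reduction to the one-variable expectation (where the independence of $\eta$ and $C_n$, the i.i.d.\ structure of the summands, and the cancellation $m^{-1}\E[\nu]=1$ must all be invoked) and the concavity verification that licenses Jensen, both of which are routine.
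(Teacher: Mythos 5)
Your proof is correct and follows essentially the same route as the paper: both reduce the recurrence to $\E[C_{n+1}]=\E\bigl[\frac{\eta\,C_n}{\eta+C_n}\bigr]$ and use concavity to conclude $\E[C_{n+1}]\leq \frac{\E[\eta]\,\E[C_n]}{\E[\eta]+\E[C_n]}$, then iterate. The only cosmetic differences are that the paper applies one-variable Jensen twice (first in $C_n$ with $\eta$ fixed, then in $\eta$) where you verify joint concavity via the Hessian, and that the paper closes by telescoping the reciprocal bound $(\E[C_{n+1}])^{-1}\geq(\E[\eta])^{-1}+(\E[C_n])^{-1}$ rather than by your induction with the monotone map $t\mapsto\frac{\E[\eta]\,t}{\E[\eta]+t}$ — these are equivalent.
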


\begin{proof}
First of all, $\E[C_1]=\E[\eta]$. From \eqref{eq:recur} we obtain for all $n\geq 1$ that
\begin{equation*}
\E[C_{n+1}]=  \E \bigg[ \frac{\eta C_n} {\eta+ C_n} \bigg].
\end{equation*}
By concavity of the function $x\mapsto \frac{x y}{x+y}$, $y>0$ being fixed, 
\begin{equation*}
\E \bigg[ \frac{\eta C_n} {\eta+ C_n}\bigg] \leq \E \bigg[\frac{\eta \E[C_n]}{\eta+ \E[C_n]}\bigg] \leq \frac{\E[\eta]\E[C_n]}{\E[\eta]+ \E[C_n]},
\end{equation*}
It follows that $(\E[C_{n+1}])^{-1} \geq (\E[\eta])^{-1}+ (\E[C_{n}])^{-1} \geq\cdots \geq (n+1) (\E[\eta])^{-1}$. 
\end{proof}

\begin{lemma}
\label{lem:moment-bd-cond}
Assume that $\E[\eta]=\E[\xi^{-1}]<\infty$. For $2\leq k\leq 4$, if $\E[\nu^k]<\infty$, then 
\begin{equation*}
\E[(C_n)^k]=O(n^{-k}) \qquad \mbox{as } n\to \infty. 
\end{equation*}
\end{lemma}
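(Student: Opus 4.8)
The plan is to prove the estimate by induction on $k$, keeping $\E[\eta]=\E[\xi^{-1}]<\infty$ and $\E[\nu^k]<\infty$ as standing hypotheses; the base case $k=1$ is precisely Lemma~\ref{lem:upper-bd-cond}. So fix $2\le k\le 4$ and assume as inductive hypothesis that $\E[(C_n)^j]=O(n^{-j})$ for every $1\le j\le k-1$. Throughout I would write $D_n\colonequals \frac{\eta\,C_n}{\eta+C_n}$, so that by \eqref{eq:recur} one has $C_{n+1}=\frac1m\sum_{i=1}^\nu D_n^{(i)}$, where, conditionally on $\nu$, the $D_n^{(i)}$ are i.i.d.\ copies of $D_n$ (with $\eta$ and $C_n$ independent and independent of $\nu$). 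The whole argument rests on the two elementary pointwise inequalities $D_n\le C_n$ and $D_n\le \eta$. Expanding the $k$-th power and conditioning on $\nu$, the standard moment-of-a-sum (partition) expansion gives
\[
\E[(C_{n+1})^k]=m^{-k}\sum_{r=1}^{k}\E[(\nu)_r]\!\!\sum_{\substack{\text{partitions of }\{1,\dots,k\}\\ \text{into }r\text{ blocks }s_1,\dots,s_r}}\ \prod_{\ell=1}^{r}\E[(D_n)^{s_\ell}],
\]
where $(\nu)_r=\nu(\nu-1)\cdots(\nu-r+1)$ and $\sum_\ell s_\ell=k$.

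First I would settle finiteness. Applying $D_n\le\eta$ to one factor and $D_n\le C_n$ to the remaining $k-1$ factors, and using that $\eta$ and $C_n$ are independent, one obtains $\E[(D_n)^k]\le \E[\eta]\,\E[(C_n)^{k-1}]$, which is finite for all large $n$ by the inductive hypothesis. Since $\E[(\nu)_r]\le\E[\nu^k]<\infty$ and every term with $r\ge 2$ involves only block sizes $s_\ell\le k-1$, each summand in the expansion is finite, whence $\E[(C_n)^k]<\infty$ for all sufficiently large $n$.

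For the rate I would isolate the single $r=1$ term, namely the ``diagonal'' contribution $m^{-k}\E[\nu]\,\E[(D_n)^k]=m^{1-k}\E[(D_n)^k]\le m^{1-k}\E[(C_n)^k]$, using $\E[\nu]=m$ and $D_n\le C_n$; crucially $m^{1-k}<1$ because $m>1$ and $k\ge2$. Every remaining term has $r\ge 2$, hence all of its block sizes satisfy $1\le s_\ell\le k-1$ with $\sum_\ell s_\ell=k$; bounding $\E[(D_n)^{s_\ell}]\le\E[(C_n)^{s_\ell}]=O(n^{-s_\ell})$ by the inductive hypothesis and using $\E[(\nu)_r]<\infty$, each such product is $O(n^{-\sum_\ell s_\ell})=O(n^{-k})$. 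Collecting terms, there is a constant $K$ with $\E[(C_{n+1})^k]\le m^{1-k}\,\E[(C_n)^k]+K\,n^{-k}$ for all large $n$. It then remains to solve this scalar recursion: with $a_n\colonequals\E[(C_n)^k]$ and $\rho\colonequals m^{1-k}\in(0,1)$, the inequality $a_{n+1}\le\rho\,a_n+K\,n^{-k}$ forces $a_n=O(n^{-k})$. Setting $b_n\colonequals n^k a_n$, one checks $b_{n+1}\le \rho(1+\tfrac1n)^k b_n+K(1+\tfrac1n)^k$, and for $n$ large enough that $\rho(1+\tfrac1n)^k<1$ this is a contracting affine recursion, so $b_n$ stays bounded. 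This closes the induction.

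The step I expect to be the main obstacle is the finiteness argument, precisely because only the \emph{first} moment of $\eta=\xi^{-1}$ is assumed finite. The naive bound $\E[(D_n)^k]\le\E[(C_n)^k]$ is circular, since its right-hand side is exactly what we are trying to control and may a priori be infinite for small $n$ (indeed $\E[(C_1)^k]$ can diverge when $\E[\eta^k]=\infty$), while estimates of the form $\E[(D_n)^k]\le\E[\eta^{\,k}]$ are simply unavailable. The resolution is the asymmetric split $D_n\le\min(\eta,C_n)$, which spends a single factor on $\eta$ (whose first moment is finite) and the remaining $k-1$ factors on $C_n$ (controlled by the previous induction step); this is what makes the $k$-th moment finite after finitely many levels despite a possibly divergent starting value. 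One must also keep in mind that the $O(n^{-k})$ bounds on the off-diagonal terms, and hence the final recursion, are only asserted for large $n$, which is all that is needed.
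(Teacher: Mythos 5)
Your proof is correct and follows essentially the same route as the paper: expand the $k$-th moment of the recurrence \eqref{eq:recur} after conditioning on $\nu$, note that the diagonal term carries the contracting factor $m^{1-k}<1$, bound every off-diagonal term by products of lower moments of $C_n$ (which are $O(n^{-k})$ in total by induction on $k$ and Lemma~\ref{lem:upper-bd-cond}), and solve the resulting scalar recursion $a_{n+1}\le m^{1-k}a_n+Kn^{-k}$. The paper does exactly this, writing out $k=2$ and $k=3$ explicitly (with slightly cruder combinatorial coefficients) and leaving $k=4$ to the reader.

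The one place where you go beyond the published argument is the finiteness issue, and your diagnosis is accurate: under the stated hypotheses $\E[(C_1)^k]$ can be infinite (it is, whenever $\E[\xi^{-k}]=\infty$), so the paper's recursion, iterated from $n=1$, has no finite anchor and strictly speaking cannot start; the paper passes over this in silence. Your asymmetric split $D_n\le\min(\eta,C_n)$, giving $\E[(D_n)^k]\le\E[\eta]\,\E[(C_n)^{k-1}]<\infty$ once $n$ is large enough for the inductive hypothesis at order $k-1$ to apply, supplies exactly the missing anchor, and your phrasing of all bounds ``for large $n$'' is the right way to make the induction on $k$ close. So your write-up is, if anything, more complete than the paper's own proof.
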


\begin{proof}
Starting from \eqref{eq:recur}, we obtain 
\begin{equation*}
\E\big[(C_{n+1})^2\big]= \frac{1}{m^2} \E[\nu] \, \E \left[\Big(\frac{\eta C_n}{\eta+C_n}\Big)^2\right] + \frac{\E(\nu (\nu-1))}{m^2} \, \big(\E[C_{n+1}]\big)^2,
\end{equation*}
by developing the square and using the independence after conditioning on $\nu$.
Together with Lemma \ref{lem:upper-bd-cond}, it follows that 
\begin{equation*}
\E\big[(C_{n+1})^2 \big]\leq  \frac1{m}\, \E\big[C_n^2\big] + \frac{\E[\nu (\nu-1)]}{m^2} \, \big(\E [C_{n+1}]\big)^2 \leq  \frac1{m}\, \E\big[C^2_n\big] + \frac{\E[\nu (\nu-1)]}{m^2} \frac{(\E[\eta])^2}{(n+1)^2}.
\end{equation*}
Since $m>1$, we get $\E[C_n^2]=O(n^{-2})$ by induction. Furthermore, if $\E[\nu^3]<\infty$, by developing the third power and using the independence, 
\begin{align*}
 \E \big[(C_{n+1})^3\big] & = \E \left[\bigg(\frac1{m} \sum_{i=1}^\nu  \frac{\eta_i C_n^{(i)}}{\eta_i + C_n^{(i)}}\bigg)^3\right] \\
&\leq  \frac{1}{m^2} \E \bigg[\Big(\frac{\eta C_n }{\eta+ C_n}\Big)^3\bigg]   +  \frac{3\E [\nu^2]}{m^3}  \E \bigg[\Big(\frac{\eta  C_n}{\eta+ C_n}\Big)^2\bigg]  \E \bigg[\frac{\eta C_n }{\eta + C_n}\bigg]+   \frac{\E[\nu^3] }{m^3}  \bigg( \E  \bigg[\frac{\eta C_n }{\eta + C_n}\bigg]\bigg)^3 \\
&\leq \frac{1}{m^2} \E\big[C_n^3\big]+ \frac{3\E [\nu^2]}{m^3}  \, \E \big[C_n^2\big]\, \E[C_n]+ \frac{\E[\nu^3]}{m^3} \, (\E[C_n])^3. 
\end{align*}
Thus, $\E[C_n^3 ]=O(n^{-3})$ follows from $\E[C_n]=O(n^{-1})$ and $\E[C_n^2 ]=O(n^{-2})$. The last bound $\E[C_n^4 ]=O(n^{-4})$ is similarly obtained by assuming that $\E[\nu^4]<\infty$. 
\end{proof}

\begin{lemma}
\label{lem:lower-bd-cond}
If $\E[\xi]\in (0,\infty)$ and $\E[\nu^2]<\infty$, then there exists a constant $c>0$ such that $\E[C_n]\geq \frac{c}{n}$ for all $n\geq 1$. 
\end{lemma}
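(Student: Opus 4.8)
The plan is to control the first two moments of $C_n$ simultaneously, leaning on the recurrences already in hand. Throughout I write $u_n\colonequals \E[C_n]$, $s_n\colonequals \E[C_n^2]$, and recall $\eta=\xi^{-1}$. The key observation is that a lower bound $u_n\ge c/n$ is equivalent to the second moment being comparable to the square of the first, and that this comparability is governed by a genuine contraction.

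First I would extract a \emph{sharp} decrement inequality for $u_n$. From the identity $\E[C_{n+1}]=\E\big[\frac{\eta C_n}{\eta+C_n}\big]$ obtained in the proof of Lemma~\ref{lem:upper-bd-cond}, I rewrite $\frac{\eta C_n}{\eta+C_n}=\frac{C_n}{1+\xi C_n}=C_n-\frac{\xi C_n^2}{1+\xi C_n}\ge C_n-\xi C_n^2$, and use the independence of $\xi$ and $C_n$ to get
\[
u_{n+1}\ \ge\ u_n-\E[\xi]\,s_n .
\]
Thus, if I can prove $s_n\le R\,u_n^2$ for some constant $R$ and all $n$, then $u_{n+1}\ge u_n\big(1-\E[\xi]R\,u_n\big)$; since $\E[\xi]R\,u_n\le \E[\xi]\E[\eta]R/n\le \tfrac12$ for $n$ large (Lemma~\ref{lem:upper-bd-cond}), passing to $v_n\colonequals 1/u_n$ gives $v_{n+1}\le v_n+2\E[\xi]R$, hence $v_n=O(n)$ and $u_n\ge c/n$. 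The finitely many small indices are harmless because $u_n>0$ there. So the whole statement reduces to the comparability $s_n\le R\,u_n^2$.

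For that I would use the second-moment recurrence from the proof of Lemma~\ref{lem:moment-bd-cond}, namely $s_{n+1}\le \frac1m s_n+a_1\,u_{n+1}^2$ (using $\E[\nu]=m$, $a_1=m^{-2}\E[\nu(\nu-1)]$ and $\E\big[(\frac{\eta C_n}{\eta+C_n})^2\big]\le s_n$). Setting $r_n\colonequals s_n/u_n^2\ge 1$, this rearranges to $r_{n+1}\le \frac1m\big(u_n/u_{n+1}\big)^2 r_n+a_1$, while the decrement inequality bounds $u_n/u_{n+1}\le\big(1-\E[\xi]\,r_n u_n\big)^{-1}$. Because $u_n\le\E[\eta]/n\to 0$, the factor $(u_n/u_{n+1})^2\to 1$, so $r_n$ is asymptotically driven by the affine contraction $r\mapsto \frac1m r+a_1$, whose unique fixed point is $\frac{a_1 m}{m-1}=\E[W^2]$. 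Choosing any $R>\E[W^2]$, the interval $[1,R]$ should be forward-invariant for $n$ beyond a threshold, yielding $\sup_n r_n<\infty$.

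The main obstacle is exactly the rigorous control of $r_n$. The contraction factor $(u_n/u_{n+1})^2$ is close to $1$ only once $u_n$ is small, i.e.\ for $n$ large, so forward-invariance of $[1,R]$ switches on only beyond a threshold $n_0(R)$ that itself grows with $R$; verifying the base case $r_{n_0(R)}\le R$ therefore couples back to the very lower bound I am after. I expect to resolve this by running one \emph{simultaneous} induction on the pair of statements $u_n\ge c/n$ and $s_n\le R\,u_n^2$, fixing $R>\E[W^2]$ first, then $c$ small enough (so that both inductive steps close for $n$ past a common threshold), and checking the finitely many initial indices directly; here the a priori estimate $s_n=O(n^{-2})$ of Lemma~\ref{lem:moment-bd-cond} guarantees that those initial ratios $r_n$ are finite. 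Getting the order of the constants right in this coupled induction is the only delicate point; everything else is the elementary Riccati-type manipulation sketched above.
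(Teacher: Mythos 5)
Your algebra is sound as far as it goes: the decrement inequality $u_{n+1}\ge u_n-\E[\xi]s_n$, the second-moment recursion $s_{n+1}\le \frac1m s_n+a_1u_{n+1}^2$, and the identification of the fixed point $\frac{a_1m}{m-1}=\E[W^2]$ are all correct, and it is true that the comparability $s_n\le R\,u_n^2$ would yield the lemma via the Riccati step. But the gap you yourself flag is genuine and your proposed fix does not close it. The coupled induction needs, as base case, $r_{n_0}\le R$ at a threshold $n_0=n_0(R)$ that grows (roughly linearly) with $R$, since you need $\E[\xi]\E[\eta]R/n$ small before the factor $\frac1m(u_n/u_{n+1})^2$ contracts. ``Checking the finitely many initial indices directly'' cannot work, because what must be checked is not finiteness of $r_n$ but the quantitative bound $r_n\le R$ against the \emph{pre-chosen} $R$, and the only unconditional estimate available is $r_{n_0}\le K n_0^{-2}/u_{n_0}^2$ from Lemma \ref{lem:moment-bd-cond}, which beats $R$ only if $u_{n_0}\gtrsim n_0^{-3/2}$ --- i.e.\ only if you already possess a polynomial lower bound of essentially the kind you are trying to prove. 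The quantifier order is the killer: if the lemma failed in the mode $n\,\E[C_n]\to 0$, then for every fixed $R$ the base case $r_{n_*}\le R$ with $n_*\ge n_0(R)$ would be false, so your scheme can never reach a contradiction; and your a priori tools do not exclude that mode, since the constraints $u_{n+1}\ge u_n-\E[\xi]s_n$, $s_n\le K/n^2$, $s_{n+1}\le \frac1m s_n + a_1 u_{n+1}^2$ are jointly consistent with even doubly exponential decay of $u_n$ (take $s_n\approx u_n/\E[\xi]$, $u_{n+1}\approx u_n^2$). A secondary mismatch: your route leans on Lemmas \ref{lem:upper-bd-cond} and \ref{lem:moment-bd-cond}, hence on $\E[\xi^{-1}]<\infty$, which is not among the hypotheses of this lemma.

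What is missing is an independent a priori lower bound, and that is exactly what the paper supplies by a different mechanism: it defines the explicit unit flow $\Theta_{\mathsf{unif}}(\{\overleftarrow{x},x\})=m^{-|x|}W^{(x)}/W$ and uses Thomson's principle to get the \emph{pathwise} bound $R_n\le\sum_{k=1}^n\sum_{|x|=k}m^{-k}\xi_x\big(W^{(x)}/W\big)^2$, valid for every $n$ with no induction; the Pemantle--Peres law of large numbers for exponentially growing blocks and Doob's $L^2$ maximal inequality (this is where $\E[\nu^2]<\infty$ enters) then give $\limsup_n R_n/n\le A\,\E[\xi]\,\E[W^2]/W^2$ almost surely, and Fatou's lemma converts this into $\liminf_n n\,\E[C_n]>0$, under exactly the stated hypotheses. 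If you want to salvage your moment-recursion approach, you would need first to produce a seed estimate $u_{n_*}\ge C\,n_*^{-3/2}$ at some sufficiently large $n_*$ (your contraction argument can then bootstrap it to $c/n$ with suitably tuned constants); I see no way to obtain that seed from the recursions alone, whereas the flow construction gives it --- and the full lemma --- outright.
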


In the following proof, we will use the uniform flow on $\T$ to give an upper bound for $R_n=C_n^{-1}$. Similar arguments can be found in Lemma 2.2 of Pemantle and Peres \cite{PP}.

\begin{proof}
We define on $\T$ the uniform flow $\Theta_{\mathsf{unif}}$ of unit strength (with the source $\{\varnothing\}$) by setting 
\begin{equation*}
\Theta_{\mathsf{unif}}(\{\overleftarrow{x}, x\})=m^{-|x|} \frac{W^{(x)}}{W} \quad \mbox{for every }x\in \T \setminus \{\varnothing\}.
\end{equation*}
According to Thomson's principle \eqref{eq:thomson}, 
\begin{equation}
R_n \leq \sum_{k=1}^n \sum_{|x|=k} m^k \xi_x  \Theta_{\mathsf{unif}}(\{\overleftarrow{x}, x\})^2 
=\sum_{k=1}^n \sum_{|x|=k} m^{-k} \xi_x \Big(\frac{W^{(x)}}{W}\Big)^2. \label{eq:upperRn}
\end{equation}
We write $A\colonequals \sup_{k\geq 1} m^{-k} \# \T_k$, which is square integrable by $L^2$-maximal inequality of Doob. It follows that 
\begin{equation*}
\frac{R_n}{n} \leq \frac{1}{n} \sum_{k=1}^n  \frac{A}{W^2} \bigg(\frac{1}{\# \T_k} \sum_{|x|=k} \xi_x (W^{(x)})^2 \bigg).
\end{equation*}
Using Proposition 2.3 in \cite{PP}, a variant of the strong law of large numbers for exponentially growing blocks of identically distributed random variables being independent inside each block, we have 
\begin{equation*}
\frac{1}{\#\T_k} \sum_{|x|=k} \xi_x (W^{(x)})^2 \build{\longrightarrow}_{k\to\infty}^{\mathrm{a.s.}} \E[\xi]\,\E[W^2].
\end{equation*}
Hence, almost surely
\begin{equation*}
\limsup_{n\to \infty} \frac{R_n}{n} \leq  A\,\E[\xi]\,\frac{\E[W^2]}{W^2},
\end{equation*}
which yields 
\begin{equation*}
\liminf_{n\to \infty} \,n\,C_n \geq  (A\,\E[\xi])^{-1}\frac{W^2}{\E[W^2]},
\end{equation*}
Taking expectation and using Fatou's lemma, we obtain 
\begin{equation*}
\liminf_{n\to \infty} \,n\,\E[C_n] \geq  \frac{\E[W^2 A^{-1}]}{\E[\xi]\,\E[W^2]}>0.
\end{equation*}
The proof is thus completed.
\end{proof}

\rem The Nash-Williams inequality (see Section 2.5 in \cite{LP}) gives the lower bound
\begin{equation*}
R_n \geq \sum_{k=1}^{n} \Big(\sum_{d(e)=k} r(e)^{-1}\Big)^{-1} =
\sum_{k=1}^{n} \Big(\sum_{|x|=k} m^{-k} (\xi_x)^{-1}\Big)^{-1}.
\end{equation*}
Suppose that $\E[\xi^{-1}]<\infty$. 
Proposition 2.3 in \cite{PP} implies that 
\begin{equation*}
\frac{1}{\# \T_k} \sum_{|x|=k} (\xi_x)^{-1} \build{\longrightarrow}_{k\to\infty}^{\mathrm{a.s.}}  \E[\xi^{-1}].
\end{equation*}
With the almost sure convergence $m^{-k}\#\T_k \to W$, it follows that 
\begin{equation*}
\frac{1}{n} \sum_{k=1}^{n} \Big(\sum_{|x|=k} m^{-k} (\xi_x)^{-1}\Big)^{-1} \build{\longrightarrow}_{n\to\infty}^{\mathrm{a.s.}}  \frac{1}{W\E[\xi^{-1}]}.
\end{equation*}
By Fatou's lemma, we obtain 	
\begin{equation*}
\liminf_{n\to \infty} \frac{\E[R_n]}{n} \geq \E\left[\liminf_{n\to \infty} \frac{R_n}{n}\right] \geq \frac{\E[W^{-1}]}{\E[\xi^{-1}]}. 
\end{equation*}
The integrability of $W^{-1}$ is therefore a necessary condition for having $\E[R_n]=O(n)$.

\section{Asymptotic expansion of the expected conductance}
\label{sec:expect-cn}

Within this section, let the assumption $\E[\xi^2+\xi^{-1}+\nu^3]<\infty$ be always in force. We first establish \eqref{eq:cv-ncn} in Theorem \ref{thm:Rn}. Afterwards we will prove Theorem \ref{thm:expect-cn} under the stronger assumption that $\E[\xi^3+\xi^{-1}+\nu^4]<\infty$. 

For every integer $n\geq 1$, we write
\begin{equation*}
x_n \colonequals  \E[C_n], \qquad y_n \colonequals  \E\big[C_n^2\big], \qquad z_n \colonequals  \E\big[C_n^3\big].
\end{equation*}
By Lemma \ref{lem:moment-bd-cond}, we have $x_n=O(n^{-1}), y_n=O(n^{-2})$ and $z_n=O(n^{-3})$.

Observe from \eqref{eq:recur-xi} that $\E[C_{n+1}]= \E \frac{C_n}{1+ \xi\, C_n}$ with $\xi$ and $C_n$ being independent. Then developing the power of $C_{n+1}$, we arrive at 
\begin{equation*}
\E\big[C^2_{n+1}\big] =
\frac{1}{m}  \E \left[\Big(\frac{C_n}{1+\xi \, C_n}\Big)^2\right] + \frac{\E[\nu (\nu-1)]}{m^2} (\E [C_{n+1}])^2 = \frac{1}{m}  \E \left[\Big(\frac{C_n}{1+\xi \, C_n}\Big)^2\right] + a_1 (\E [C_{n+1}])^2
\end{equation*}
and 
\begin{align*}
\E\big[C^3_{n+1}\big]
&=  \frac{1}{m^2} \, \E \left[\Big( \frac{C_n}{1+\xi \, C_n}\Big)^3\right] + \frac{3\E[\nu(\nu-1)]}{m^3} \, \E \left[\Big(\frac{C_n}{1+\xi \, C_n}\Big)^2\right] \, \E  \left[ \frac{C_n}{1+\xi \, C_n} \right] \\
 & \qquad \qquad  +m^{-3}\E\bigg[ \sum_{1\leq i,j,k\leq \nu} \ind{\{i\neq j\neq k\}} \bigg]  \Big(\E\left[\frac{C_n}{1+\xi \, C_n}\right]\Big)^3   \\ 
&=  \frac{1}{m^2} \E \left[\Big( \frac{C_n}{1+\xi \, C_n}\Big)^3\right] + \frac{3a_1}{m} \E \left[\Big(\frac{C_n}{1+\xi \, C_n}\Big)^2\right] \, \E  \left[ \frac{C_n}{1+\xi \, C_n}\right] + a_2 \Big(\E\left[\frac{C_n}{1+\xi \, C_n}\right]\Big)^3,
\end{align*}
with the constants $a_1, a_2$ defined as in \eqref{eq:a1-constant} and \eqref{eq:a2-constant}.

Using the identity $\frac{1}{1+x}=1-x+\frac{x^2}{1+x}$, we obtain 
\begin{eqnarray*}
\E \left[\frac{C_n}{1+ \xi\, C_n}\right]
&=&
\E [C_n] - \E[\xi]\, \E[C_n^2] + \E\left[ \frac{\xi^2 C_n^3}{1+ \xi\, C_n} \right]\\
&=&
\E [C_n] - \E[\xi]\, \E[C_n^2] +  O(n^{-3}),
\end{eqnarray*}
because $\E[C_n^3]=O(n^{-3})$ and $\E[\xi^2]<\infty$. Similarly, 
\begin{equation*}
\E \left[\Big(\frac{C_n}{1+\xi \, C_n}\Big)^2\right] = \E[C_n^2] +O(n^{-3}).
\end{equation*}
Hence, we have 
\begin{eqnarray}
x_{n+1} &=& x_n - b_1 \, y_n + O(n^{-3}), \label{eq:xn0} \\
y_{n+1} &=& \frac{y_n}{m} + a_1 \,  x_{n+1}^2 + O(n^{-3}). \label{eq:yn0} 
\end{eqnarray} 

Remark that 
\begin{equation*}
x_{n+1} = \E\left[\frac{C_n}{1+\xi \, C_n}\right] \geq  \E[C_n] -  \E[\xi]\, \E[C_n^2]= x_n - b_1 y_n.
\end{equation*}
Since $x_n \geq \frac{c}{n}$ by Lemma \ref{lem:lower-bd-cond} and $y_n=O(n^{-2})$, we get $\frac{x_{n}}{x_{n+1}} \leq 1+ \frac{C}{n} $ for some positive constant $C$ independent of $n$. It follows that for any $i < n/2$, 
\begin{equation}
\label{eq:ratio-xn}
1\leq \frac{x_{n-i}}{x_n} \leq \prod_{j=n-i}^{n-1} (1+ \frac{C}{j}) \leq \exp\big(C i/(n-i)\big) \leq 1+ C' \frac{i}{n}
\end{equation}
with another constant $C'>0$.

Still by Lemma \ref{lem:lower-bd-cond}, we can divide all terms in $\eqref{eq:xn0}$ by $x_n x_{n+1}$, which leads to 
\begin{equation}
\label{eq:1/xn-diff}
\frac{1}{x_{n+1}}-  \frac{1}{x_n} = b_1  \frac{y_n}{x_n x_{n+1}} + O(n^{-1}).
\end{equation}
By induction, \eqref{eq:yn0} implies that 
\begin{equation*}
y_{n} = a_1 \, \sum_{i=0}^{n-1} m^{-i} x_{n-i}^2 + O(n^{-3}).
\end{equation*}
Using \eqref{eq:ratio-xn}, we deduce that 
\begin{equation*} 
\frac{y_n}{x_n x_{n+1}} = a_1 \, \sum_{i=0}^\infty m^{-i}+O(n^{-1}) = \frac{a_1}{1- m^{-1}}+O(n^{-1}).
\end{equation*}
It follows from \eqref{eq:1/xn-diff} that
\begin{equation}
\label{eq:1/xn-dif0}
\frac1{x_{n+1}}-  \frac1{x_n}=  \frac{a_1 \, b_1}{1- m^{-1}} + O(n^{-1})= c_1+O(n^{-1}),
\end{equation}
with the constant $c_1$ defined in \eqref{eq:c1-constant}. Consequently, 
\begin{equation}
\label{eq:xn-limit}
\frac{1}{x_n}=c_1 n+O(\log n),
\end{equation}
and
\begin{equation}
\label{eq:xn-limit2}
x_n= \frac{1}{c_1 n } + O(\frac{\log n}{n^2}),
\end{equation}
which gives the convergence \eqref{eq:cv-ncn}.  

\smallskip

Assuming from now on that $\E[\xi^3+\xi^{-1}+\nu^4]<\infty$, we proceed to find higher-order asymptotic expansions for $x_n$. Using the identity $\frac{1}{1+x}=1-x+x^2-\frac{x^3}{1+x}$, we obtain 
\begin{eqnarray*}
\E \left[\frac{C_n}{1+ \xi\, C_n}\right]
&=&
\E [C_n] - \E[\xi]\, \E[C_n^2] + \E[\xi^2]\, \E[C_n^3] - \E\left[ \frac{\xi^3 C_n^4}{1+ \xi\, C_n} \right]\\
&=&
\E [C_n] - \E[\xi]\, \E[C_n^2] + \E[\xi^2]\, \E[C_n^3] + O(n^{-4}),
\end{eqnarray*}
as $\E[\xi^3]<\infty$ and $\E[C_n^4]=O(n^{-4})$ by Lemma \ref{lem:moment-bd-cond}. We prove in the same manner that 
\begin{eqnarray*}
\E \left[\Big(\frac{C_n}{1+\xi \, C_n}\Big)^2\right] &=& \E[C_n^2] -2 \, \E[\xi]\,\E[C_n^3]+O(n^{-4}), \\
\E \left[\Big(\frac{C_n}{1+\xi \, C_n}\Big)^3\right] &=& \E[C_n^3] +O(n^{-4}).
\end{eqnarray*}
Hence, we deduce that
\begin{eqnarray}
x_{n+1} &=& x_n - b_1 y_n + b_2 z_n + O(n^{-4}), \label{eq:xn} \\
y_{n+1} &=& \frac{y_n}{m} + a_1 \,  x_{n+1}^2 -  \frac{2 b_1}{m} z_n + O(n^{-4}) \nonumber \\
&=& \frac{y_n}{m} + a_1\,  x_n^2- \Big( 2 a_1 b_1 x_n y_n + \frac{2 b_1}{m} z_n \Big) + O(n^{-4}), \label{eq:yn} \\
z_{n+1} &=& \frac{z_n}{m^2} + \frac{3a_1}{m} x_{n+1} y_n + a_2 x_{n+1}^3 + O(n^{-4}) 
\nonumber \\ 
&=& \frac{z_n}{m^2} + \frac{3a_1}{m} x_n y_n + a_2 x_n^3 + O(n^{-4}) . \label{eq:zn}
\end{eqnarray} 

Dividing all terms in \eqref{eq:zn} by $x_{n+1}^3$ gives 
\begin{equation*}
\frac{z_{n+1}}{x_{n+1}^3}
= \frac{x_n^3}{x_{n+1}^3} \Big( \frac{1}{m^2} \frac{z_n}{x_n^3} + \frac{3 a_1}{m} \frac{y_n}{x_n^2} + a_2 + O(n^{-1})\Big).
\end{equation*}
Recall that $\frac{x_n}{x_{n+1}}= 1+ O(n^{-1})$ by \eqref{eq:ratio-xn}. Hence, 
\begin{equation*}
\frac{z_{n+1}}{x_{n+1}^3}
= \frac{1}{m^2} \frac{z_n}{x_n^3} + \frac{3 a_1}{m}\frac{y_n}{x_n^2} + a_2 + O(n^{-1}).
\end{equation*}
Since  
\begin{equation}
\label{eq:y-x2}
\frac{y_n}{x_n^2}=\frac{a_1}{1-m^{-1}} +O(n^{-1}),
\end{equation}
we get by induction that 
\begin{equation*}
\frac{z_{n+1}}{x_{n+1}^3}
= \sum_{i=0}^{n} m^{-2 i}\Big( \frac{3 a_1}{m} \frac{a_1}{1-m^{-1}}+a_2\Big) +O(n^{-1}).
\end{equation*}
Then we have 
\begin{equation}
\label{eq:c2-limit}
\frac{z_{n+1}}{x_{n+1}^3} = c_2+O(n^{-1}),
\end{equation}
with the constant $c_2$ defined in \eqref{eq:c2-constant}.

Dividing all terms in \eqref{eq:yn} by $x_{n+1}^2$ gives 
\begin{equation}
\label{eq:y-x2-expan}
 \frac{y_{n+1}}{x_{n+1}^2} = \frac{x_n^2}{x_{n+1}^2} \Big(\frac{y_n}{m x_n^2} + a_1 - 2 a_1 b_1 \frac{y_n}{x_n} -\frac{2 b_1}{m} \frac{z_n}{x_n^2} \Big) + O(n^{-2}).
\end{equation}
For every $n\geq 1$, define 
\begin{eqnarray*}
\varepsilon_n &\colonequals& \frac1{x_{n+1}}- \frac1{x_n}- c_1 , \\
\delta_n &\colonequals& \frac{y_{n+1}}{x_{n+1}^2}-\frac{y_n}{m x_n^2}-a_1. 
\end{eqnarray*}
It has been shown that $\varepsilon_n=O(n^{-1})$. Putting 
\begin{equation*}
\frac{x_n^2}{x_{n+1}^2}= \big(1+ (c_1+\varepsilon_n) x_n\big)^2 = 1+ 2 c_1 x_n+ O(n^{-2})
\end{equation*}
into \eqref{eq:y-x2-expan}, we see that
\begin{equation*}
\delta_n = 2 a_1 c_1 x_n + \Big( \frac{2 c_1}{m} - 2 a_1 b_1 \Big) \frac{y_n}{x_n} - \frac{2 b_1}{m} \frac{z_n}{x_n^2} + O(n^{-2}).
\end{equation*}
By \eqref{eq:y-x2} and \eqref{eq:c2-limit}, it follows that
\begin{equation*}
\frac{\delta_n}{x_n} \build{\longrightarrow}_{n\to\infty}^{} 2 a_1 c_1+\frac{a_1}{1-m^{-1}} \Big(\frac{2 c_1}{m} - 2 a_1b_1 \Big)- \frac{2 b_1 c_2}{m }=c_3,
\end{equation*}
with the constant $c_3$ defined in \eqref{eq:c3-constant}. 
Moreover, in view of \eqref{eq:xn-limit2}, we derive from  
\begin{equation*}
\delta_n = x_n \Big(2 a_1 c_1 + \big(\frac{2 c_1}{m} - 2 a_1 b_1\big) \frac{y_n}{x_n^2} - \frac{2 b_1}{m} \frac{z_n}{x_n^3}\Big) + O(n^{-2})
\end{equation*}
that $\delta_n=\frac{c_3}{c_1}\frac{1}{n} + O(n^{-2} \log n)$. 
If we set 
\begin{equation*}
\Delta_{n+1} \colonequals  \frac{y_{n+1}}{x_{n+1}^2}-  \frac{a_1}{1- m^{-1}},
\end{equation*}
then $\Delta_{n+1}=\frac1{m} \Delta_n + \delta_n$ by the definition of $\delta_n$. It follows by induction that
\begin{equation}
\label{eq:delta}
\Delta_{n+1}=m^{-n}\Delta_1+\sum_{i=0}^{n-1} m^{-i}\delta_{n-i}=\frac{c_3}{c_1 (1- m^{-1})} \frac{1}{n} + O(n^{-2} \log n).
\end{equation}

Going back to \eqref{eq:xn}, we obtain by the definition of $\varepsilon_n$ that 
\begin{eqnarray*}
c_1+\varepsilon_n
&=& 
\frac{1}{x_{n+1}} \, \Big( 1- \frac{x_{n+1}}{x_n}\Big) 
\\
&=&
\frac{x_n}{x_{n+1}} \, \Big( b_1  \frac{y_n}{x_n^2} - b_2 \frac{z_n}{x_n^2} \Big)+ O(n^{-2}) 
\\
&=&
(1+ (c_1 +\varepsilon_n) x_n) \, \Big( b_1  \frac{y_n}{x_n^2} - b_2 \frac{z_n}{x_n^2} \Big)+ O(n^{-2})
\\
&=&
 b_1  \frac{y_n}{x_n^2} +c_1 b_1  \frac{y_n}{x_n} - b_2 \frac{z_n}{x_n^2} + O(n^{-2}). 
\end{eqnarray*}
As $c_1=\frac{a_1 b_1}{1- m^{-1}}$, we deduce that 
\begin{equation}
\label{eq:epsilon}
\varepsilon_n = b_1  \Delta_n + x_n \Big(c_1 b_1  \frac{y_n}{x_n^2} - b_2 \frac{z_n}{x_n^3}\Big)  + O(n^{-2}).
\end{equation}
Using \eqref{eq:xn-limit2}, \eqref{eq:y-x2} and \eqref{eq:c2-limit}, we get that 
\begin{equation*}
\varepsilon_n  = \frac{c_4}{n} + O (n^{-2} \log n), 
\end{equation*}
which implies the absolute convergence of $ \sum_{i=1}^\infty  (\varepsilon_i -\frac{c_4}{i})$.
Hence, 
\begin{equation*}
\frac{1}{x_n} =\frac{1}{x_1} + c_1 (n-1) + \sum_{i=1}^{n-1} \varepsilon_i = c_1 n + c_4 \log n + c_0 + o(1),
\end{equation*} 
with the constant 
\begin{equation*}
c_0 \colonequals -c_1+ \frac{1}{x_1}+\sum_{i=1}^\infty  \big(\varepsilon_i -\frac{c_4}{i} \big)= -c_1+ \frac{1}{\E[\xi^{-1}]}+\sum_{i=1}^\infty  \big(\varepsilon_i -\frac{c_4}{i} \big).
\end{equation*}
Finally we have 
\begin{equation}
\label{eq:expected-cn} 
\E[C_n] = x_n = \frac{1}{c_1 n} - \frac{c_4}{c_1^2} \frac{\log n}{n^2} - \frac{c_0}{c_1^2} \frac{1}{n^2} + O(\frac{(\log n)^2}{n^3}).
\end{equation}

\section{Almost sure convergence and rate of convergence}
\label{sec:as-cv}


To prove Theorems \ref{thm:cv-Cn} and \ref{thm:cv-rate}, let us write 
\begin{eqnarray*}
Y_n &\colonequals&  \{C_n\}- W ,\\
\Pi_n &\colonequals& C_n \Big(\frac{1}{x_{n+1}}- \frac{1}{x_n}  -  \frac{1}{x_{n+1}} \frac{\xi \,C_n}{1+ \xi \, C_n}\Big).
\end{eqnarray*}
For every vertex $x\in \T$ and $j\geq 1$, we also define 
\begin{eqnarray*}
C_j^{(x)} &\colonequals& m^{|x|} \, C \big(\{x\} \leftrightarrow \T_{j+|x|}[x] \big),\\
Y_j^{(x)} &\colonequals& \{C_j^{(x)}\}-W^{(x)} ,\\
\Pi_j^{(x)} &\colonequals& C_j^{(x)} \Big( c_1+ \varepsilon_j - \frac{1}{x_{j+1}} \frac{\xi_x \, C_j^{(x)}}{1+ \xi_x C_j^{(x)}}\Big).
\end{eqnarray*}
Using \eqref{eq:recur-xi}, we have 
\begin{equation*}
\{C_{n}\}
 = \frac{1}{x_{n}}  \frac1{m} \sum_{i=1}^\nu  \frac{C_{n-1}^{(i)}}{1+ \xi_i C_{n-1}^{(i)}}
 = \frac{1}{m} \sum_{i=1}^\nu \{C_{n-1}^{(i)}\} +  \frac{1}{m} \sum_{i=1}^\nu \Pi_{n-1}^{(i)},
\end{equation*}
Using the simple equality $W= m^{-1} \sum_{i=1}^\nu W^{(i)}$, we deduce that 
\begin{equation*}
 Y_n =  \frac{1}{m} \sum_{i=1}^\nu   Y_{n-1}^{(i)} + \frac{1}{m} \sum_{i=1}^\nu \Pi_{n-1}^{(i)}.
\end{equation*}
Since $W= m^{-k} \sum_{|x|=k} W^{(x)}$, by induction,
\begin{equation*}
Y_n = \frac{1}{m^k} \sum_{|x|=k}  Y_{n-k}^{(x)} + \sum_{\ell=1}^k \frac{1}{m^\ell} \, \sum_{|y|=\ell} \Pi_{n-\ell}^{(y)} \quad \mbox{for any }1\leq k <n.
\end{equation*}

\smallskip
\noindent {\it Proof of Theorem \ref{thm:cv-Cn}.} 
Assume that $\E[\xi + \xi^{-1} +\nu^2]<\infty$. Notice that our proof preceding \eqref{eq:ratio-xn} to establish $\frac{x_n}{x_{n+1}}= 1+ O(n^{-1})$ is still valid. Besides, $y_n=\E[C_n^2]=O(n^{-2})$ by Lemma \ref{lem:moment-bd-cond}, and $\frac{y_n}{x_{n+1}}=O(n^{-1})$ by Lemma \ref{lem:lower-bd-cond}. Hence, we derive from the inequality 
\begin{equation*}
\E\big[|\Pi_n|\big]\leq x_n \Big(\frac{1}{x_{n+1}}- \frac{1}{x_n}\Big) +\frac{1}{x_{n+1}} \E \Big[\frac{\xi \, (C_n)^2}{1+ \xi \, C_n}\Big] \leq \frac{x_n}{x_{n+1}}- 1 + \frac{y_n}{x_{n+1}}\E[\xi]
\end{equation*}
that $\E[|\Pi_n|]\leq \frac{C}{n}$ with some constant $C>0$.

Conditioning on the first $k$ levels of the tree $\T$, $(Y_{n-k}^{(x)}, |x|=k)$ are i.i.d.~copies of $Y_{n-k}$. Using the fact that $Y_n$ is of zero mean and uniformly bounded in $L^2$, we can find a constant $C'>0$ such that 
\begin{equation}
\label{eq:Y-L2}
\E\bigg[\Big( \frac{1}{m^k} \sum_{|x|=k}  Y_{n-k}^{(x)}\Big)^2\bigg]
  = m^{-k} \, \E\Big[(  Y_{n-k})^2  \Big]\leq  C'\, m^{-k}.
\end{equation}
Meanwhile, \begin{equation*}
\E \bigg[\sum_{\ell=1}^{k} \frac{1}{m^\ell}\sum_{|y|=\ell} |\Pi^{(y)}_{n-\ell}|\bigg] \leq \sum_{\ell=1}^{k} \frac{C}{n-\ell} \leq \frac{C k}{n-k}.
\end{equation*}
It follows that 
\begin{equation*}
\E \big[| Y_n |\big] \leq  \sqrt{C' \, m^{-k}}+ \frac{C k}{n-k} .
\end{equation*}
By taking $k= C'' \log n$ for some constant $C''$ sufficiently large, we see that 
\begin{equation*}
\E \big[| Y_n |\big] =O(\frac{\log n}{n}).
\end{equation*}
Choose a subsequence $n_j= j^2$. Borel--Cantelli's lemma gives that $Y_{n_j}$ converges to $0$ almost surely. The monotonicity of $C_n$ shows that for any $n_j \leq n < n_{j+1}$, 
\begin{equation*}
\frac{x_{n_{j+1}}}{x_{n_j}} \cdot \{C_{n_{j+1}}\}   \leq  \{C_n\} \leq \frac{x_{n_j}}{x_{n_{j+1}}}\cdot \{C_{n_j}\} .
\end{equation*}
By \eqref{eq:ratio-xn}, the almost sure convergence of $Y_n$ readily follows. $\hfill \Box$

\smallskip
Together with \eqref{eq:xn-limit}, Theorem \ref{thm:cv-Cn} implies that 
\begin{equation}
\label{eq:cv-ncn-as}
n\, C_{n} \build{\longrightarrow}_{n\to\infty}^{\mathrm{a.s.}} \frac{W}{c_1},
\end{equation}
provided $\E[\xi^2+\xi^{-1}+\nu^3]<\infty$.

\smallskip
\noindent {\it Proof of Theorem \ref{thm:cv-rate}.} 
Assume now $\E[\xi^3+\xi^{-1}+\nu^4]<\infty$. First, observe that taking the subsequence $k_n=\frac{4}{\log m}\log n$ in \eqref{eq:Y-L2} yields
\begin{equation*}
n \Big(\frac{1}{m^{k_n}} \sum_{|x|=k_n}  Y_{n-k_n}^{(x)}\Big) \build{\longrightarrow}_{n\to\infty}^{} 0 \quad \mbox{ in $L^2$.}
\end{equation*}
By Borel--Cantelli's lemma, the preceding convergence also holds in the almost sure sense. 
We claim that 
\begin{equation}
\label{eq:clt} 
 \sum_{\ell=1}^{k_n} \frac{1}{m^\ell} \, \sum_{|y|=\ell} n \, \Pi_{n-\ell}^{(y)} 
\, \build{\longrightarrow}_{n\to\infty}^{(\mathrm{P})}\, \sum_{\ell=1}^\infty \frac{1}{m^\ell} \, \sum_{|y|=\ell}  W^{(y)} \,  \Big(1-\frac{\xi_y}{c_1} \, W^{(y)}\Big).  
\end{equation}
In fact, for each vertex $y$ at fixed depth $\ell$, 
\begin{equation*}
n\, C_{n-\ell}^{(y)}\build{\longrightarrow}_{n\to\infty}^{\mathrm{a.s.}} \frac{W^{(y)}}{c_1} \quad \mbox{ and } \quad n \,  \Pi_{n-\ell}^{(y)} \build{\longrightarrow}_{n\to\infty}^{\mathrm{a.s.}} W^{(y)} \Big(1 -  \frac{\xi_y }{c_1} \, W^{(y)}\Big). 
\end{equation*} 
So for any integer $K\geq 1$, 
\begin{equation*}
\sum_{\ell=1}^K \frac{1}{m^\ell} \, \sum_{|y|=\ell} n \, \Pi_{n-\ell}^{(y)} \,\build{\longrightarrow}_{n\to\infty}^{\mathrm{a.s.}} \, \sum_{\ell=1}^K \frac{1}{m^\ell} \, \sum_{|y|=\ell}  W^{(y)} \,  \Big(1-\frac{\xi_y}{c_1} \, W^{(y)}\Big).
\end{equation*} 
Note that  
\begin{equation*}
\E \bigg[\Big(\frac1{m^\ell} \, \sum_{|y|=\ell} n \,\Pi_{n-\ell}^{(y)} \Big)^2\bigg] \leq 
  m^{-\ell}\, n^2 \,\E \big[ \Pi^2_{n-\ell}\big]+ \frac{\E[(\# \T_\ell)^2]}{m^{2\ell}} \,n^2 \big(\E[\Pi_{n-\ell}]\big)^2.
\end{equation*}
On the one hand, 
\begin{eqnarray*}
\E\big[\Pi_n^2 \big] &\leq & 2\Big(\frac{1}{x_{n+1}}- \frac{1}{x_n}\Big)^2 \,  \E\big[C_n^2\big] +\frac{2}{(x_{n+1})^2} \E\Big[\frac{\xi^2\, C_n^4}{(1+\xi C_n)^2}\Big]\\
& \leq & 2\Big(\frac{1}{x_{n+1}}- \frac{1}{x_n}\Big)^2 \,  \E\big[C_n^2\big] +\frac{2}{(x_{n+1})^2} \E\big[\xi^2\big]\, \E\big[C_n^4\big].
\end{eqnarray*}
Using \eqref{eq:1/xn-dif0} and the facts that $x_n$ is of order $n^{-1}$, $\E[C_n^2]= O(n^{-2})$ and $\E[C_n^4]=O(n^{-4})$, we deduce that $\E[\Pi_n^2]=O(n^{-2})$.
On the other hand,  
\begin{eqnarray*}
\E[\Pi_n]
&=&
 \frac{x_n}{x_{n+1}} -1 -  \frac{1}{x_{n+1}} \E[\xi\, C_n^2] + \frac{1}{x_{n+1}}  \E[\xi^2 C_n^3] - \frac{1}{x_{n+1}} \E  \Big[\frac{\xi^3 C_n^4}{1+ \xi C_n} \Big] \\
&=&
\frac{x_n}{x_{n+1}} -1 -  \frac{1}{x_{n+1}} b_1 y_n + \frac{1}{x_{n+1}}  b_2 z_n + O(n^{-3}).
\end{eqnarray*}
It follows by \eqref{eq:xn} that $\E[\Pi_n]=O(n^{-3})$. In particular, $\E[\Pi_{n-\ell}]= O(n^{-3})$ for any $\ell =o(n)$.  
Besides, $m^{-2\ell}\, \E[(\# \T_\ell)^2]$ is uniformly bounded in $\ell$. Hence, there exists some constant $\widetilde C>0$ so that 
\begin{equation*}
\E \bigg[\Big(\frac{1}{m^\ell} \, \sum_{|y|=\ell} n \,\Pi_{n-\ell}^{(y)} \Big)^2 \bigg] 
\leq \widetilde C m^{-\ell} +  \widetilde C n^{-4}  \quad \mbox{ for all } \ell \leq k_n.
\end{equation*}
It follows that    
\begin{equation*}
\lim_{K\to \infty} \limsup_{n\to \infty}  \bigg\|  \sum_{\ell=K}^{k_n}\frac{1}{m^\ell} \, \sum_{|y|=\ell} n \,\Pi_{n-\ell}^{(y)} \bigg \|_{L^2} =\, 0,
\end{equation*}
which yields \eqref{eq:clt}. Therefore, 
\begin{equation*}
n  Y_n = n \Big(\frac{1}{m^{k_n}} \sum_{|x|=k_n}  Y_{n-k_n}^{(x)}\Big) + \sum_{\ell=1}^{k_n} \frac{1}{m^\ell} \, \sum_{|y|=\ell} n \, \Pi_{n-\ell}^{(y)} 
\, \build{\longrightarrow}_{n\to\infty}^{(\mathrm{P})}\, \sum_{\ell=1}^\infty \frac{1}{m^\ell} \, \sum_{|y|=\ell}  W^{(y)} \,  \Big(1-\frac{\xi_y}{c_1} \, W^{(y)}\Big).  
\end{equation*}
In view of \eqref{eq:expected-cn}, we have 
\begin{equation*}
n^2 C_n - \bigg( \frac{W}{c_1}n - \frac{c_4\, W}{c_1^2} \log n  -\frac{c_0 W}{c_1^2} + \frac{1}{c_1}\sum_{\ell=1}^\infty \frac{1}{m^\ell} \, \sum_{|y|=\ell}  W^{(y)} \,  \Big(1-\frac{\xi_y}{c_1} \, W^{(y)}\Big) \bigg)
\, \build{\longrightarrow}_{n\to\infty}^{(\mathrm{P})}\, 0,
\end{equation*}
and the convergence \eqref{eq:rn} follows immediately. $\hfill \Box$

\section{The expected resistance}
\label{sec:expected-rn}

When $\E[\xi^2+\xi^{-1}+\nu^3]<\infty$, it follows from \eqref{eq:cv-ncn-as} that 
\begin{equation*}
\frac{R_n}{n} \build{\longrightarrow}_{n\to\infty}^{\mathrm{a.s.}} \frac{c_1}{W}.
\end{equation*}
The following lemma yields the uniform integrability of $(\frac{R_n}{n}, n\geq 1)$, and completes the proof of Theorem \ref{thm:Rn}. 

\begin{lemma} 
Suppose that $p_1 \,m < 1$ and $\E [\xi^r + \nu^{2 r}] < \infty$ for some $r>1$. Then there exists some $s>1$ such that 
\begin{equation*}
\sup_{n\geq 1} \E\Big[ \Big(\frac{R_n}{n}\Big)^s\Big] < \infty.
\end{equation*} 
\end{lemma}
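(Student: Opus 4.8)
The plan is to start from the uniform-flow upper bound already obtained in the proof of Lemma~\ref{lem:lower-bd-cond}, namely \eqref{eq:upperRn}, which I rewrite as $R_n \le W^{-2}\sum_{k=1}^n S_k$ with $S_k\colonequals m^{-k}\sum_{|x|=k}\xi_x(W^{(x)})^2$. Setting $U_n\colonequals\frac1n\sum_{k=1}^nS_k$, this reads $R_n/n\le W^{-2}U_n$. Since $t\mapsto t^s$ is convex for $s\ge1$ and $U_n$ is an average, Jensen's inequality gives $(R_n/n)^s\le\frac1n\sum_{k=1}^n(W^{-2}S_k)^s$, so it suffices to establish $\sup_{k\ge1}\E[(W^{-2}S_k)^s]<\infty$ for some $s>1$. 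I will take $s\in(1,\min(r,\beta,2))$, where $\beta\colonequals\log(1/p_1)/\log m$ (with $\beta=\infty$ if $p_1=0$); the hypothesis $p_1m<1$ is exactly $\beta>1$, and $r>1$, so this interval is nonempty.

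Next I would condition on the first $k$ levels $\T_k$. As noted in the paper, given $\#\T_k=N$ the pairs $(\xi_x,W^{(x)})_{|x|=k}$ are $N$ i.i.d.\ copies of $(\xi,W)$ with the two coordinates independent of each other and of $\T_k$; write them $(\xi_j,W_j)_{1\le j\le N}$. Then $W^{-2}S_k=m^k(\sum_j\xi_jW_j^2)/(\sum_jW_j)^2$, whence $\E[(W^{-2}S_k)^s\mid\T_k]=m^{ks}\psi(\#\T_k)$ with $\psi(N)\colonequals\E[(\sum_j\xi_jW_j^2)^s/(\sum_jW_j)^{2s}]$. First I would prove the ratio bound $\psi(N)\le CN^{-s}$ uniformly in $N$. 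Put $A\colonequals\sum_j\xi_jW_j^2$ and $B\colonequals\sum_jW_j$, so $\E[B]=N$. On $\{B\ge N/2\}$, bound $A/B^2\le 4A/N^2$ and use a von Bahr--Esseen inequality, which yields $\E[A^s]\le CN^s$ once $\E[(\xi W^2)^s]=\E[\xi^s]\E[W^{2s}]<\infty$; this holds because $s\le r$ forces $\E[\xi^s]<\infty$, while $\E[W^{2s}]<\infty$ follows from $\E[\nu^{2r}]<\infty$ through the classical equivalence $\E[W^p]<\infty\iff\E[\nu^p]<\infty$. On the lower-deviation event $\{B<N/2\}$ use $A\le(\max_j\xi_j)B^2$, so $A/B^2\le\max_j\xi_j$; since $\{\xi_j\}$ and $\{W_j\}$ are independent and an exponential Chebyshev bound gives $\P(B<N/2)\le\rho^N$ with $\rho\colonequals\inf_{\lambda>0}e^{\lambda/2}\E[e^{-\lambda W}]<1$ (the infimum is $<1$ because $\E[W]=1$), this contribution is at most $\E[(\max_j\xi_j)^s]\rho^N\le N\E[\xi^s]\rho^N=o(N^{-s})$. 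Together these give $\psi(N)\le CN^{-s}$.

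Feeding this back gives $\E[(W^{-2}S_k)^s]\le Cm^{ks}\E[(\#\T_k)^{-s}]=C\,\E[W_k^{-s}]$ with $W_k=m^{-k}\#\T_k$, so it remains to show $\sup_k\E[W_k^{-s}]<\infty$. I would handle this through the Laplace transform $\phi_k(\lambda)\colonequals\E[e^{-\lambda W_k}]$, which satisfies $\phi_{k+1}(\lambda)=f(\phi_k(\lambda/m))$ with $f(t)=\E[t^\nu]$ and $\phi_0(\lambda)=e^{-\lambda}$, combined with $\E[W_k^{-s}]=\Gamma(s)^{-1}\int_0^\infty\lambda^{s-1}\phi_k(\lambda)\,d\lambda$. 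Since $p_0=0$ gives $f(0)=0$ and $f'(0)=p_1$, a function $g(\lambda)=\min(1,c\lambda^{-a})$ with $s<a<\beta$ satisfies $f(g(\lambda/m))\le g(\lambda)$ for large $\lambda$ (as $p_1m^a<1$) and, after fixing $c$, for all $\lambda$; because $\phi_0\le g$, induction gives $\phi_k\le g$ for every $k$. Hence $\sup_k\E[W_k^{-s}]\le\Gamma(s)^{-1}\int_0^\infty\lambda^{s-1}g(\lambda)\,d\lambda<\infty$, the integral converging at infinity since $a>s$ and near the origin since $s>0$. Combining the three steps proves the lemma.

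The main obstacle is the uniform negative-moment bound $\sup_k\E[W_k^{-s}]<\infty$: this is precisely where $p_1m<1$ (equivalently $\beta>1$) is indispensable, and it is what limits how large $s$ may be chosen. By comparison the ratio estimate $\psi(N)\le CN^{-s}$ is fairly routine, its only delicate ingredient being the one-sided lower-deviation bound for $\sum_jW_j$, which stays clean because $W$ has mean one and the exponential Chebyshev estimate needs only the (always finite) Laplace transform of $W$ near the origin.
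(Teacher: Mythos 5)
Your overall skeleton is essentially the paper's: start from the uniform-flow bound \eqref{eq:upperRn}, raise to the power $s$ by convexity, condition on generation $k$ to reduce the problem to a uniform bound on negative moments of $W_k=m^{-k}\#\T_k$, and use Bingham--Doney to get $\E[W^{2s}]<\infty$ from $\E[\nu^{2r}]<\infty$. Your middle step is a genuine (and correct) variant: you bound $\psi(N)\le CN^{-s}$ by splitting on the lower-deviation event $\{\sum_j W_j<N/2\}$ (exponential Chebyshev plus $A/B^2\le\max_j\xi_j$ there, von Bahr--Esseen on the complement), whereas the paper computes $\E\big[(W^{(x)}/W)^{2s}\,\big|\,{\mathscr F}_k\big]$ exactly via the identity $a^{-2s}=\Gamma(2s)^{-1}\int_0^\infty t^{2s-1}e^{-at}\,dt$ and Laplace transforms; your route is more elementary and handles the dependence between numerator and denominator just as well.

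The genuine gap is in your last step, the claim $\sup_k\E[W_k^{-s}]<\infty$. The proposed majorant $g(\lambda)=\min(1,c\lambda^{-a})$ does \emph{not} satisfy $f(g(\lambda/m))\le g(\lambda)$ for all $\lambda$, for any choice of $c$: on the interval $\lambda\in(c^{1/a},\,m\,c^{1/a}]$ one has $g(\lambda/m)=1$, hence $f(g(\lambda/m))=f(1)=1$, while $g(\lambda)=c\lambda^{-a}<1$. Enlarging $c$ only translates this bad interval, it never removes it; and the obstruction is structural: any $g$ satisfying the functional inequality which equals $1$ on some $[0,t_0]$, $t_0>0$, must satisfy $g\ge f(1)=1$ on $(t_0,mt_0]$ as well, hence $g\equiv 1$ by induction, incompatible with decay --- yet $g\ge\phi_0(\lambda)=e^{-\lambda}$ forces $g(0^+)=1$, so no truncated-power majorant can work. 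The clean repair is exactly what the paper does: since $W_k=\E[W\,|\,{\mathscr F}_k]$ and $x\mapsto x^{-s}$ is convex, Jensen gives $\E[W_k^{-s}]\le\E[W^{-s}]$ uniformly in $k$ (equivalently $\phi_k\le\phi$ pointwise, where $\phi(\lambda)=\E[e^{-\lambda W}]$ is a fixed point of your recursion and dominates $\phi_0$ by Jensen); the finiteness $\E[W^{-s}]<\infty$ for $1<s<\beta$ is then precisely Dubuc's theorem, which the hypothesis $p_1m<1$ lets you invoke. With that substitution for your step on $\sup_k\E[W_k^{-s}]$, the rest of your argument goes through.
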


\begin{proof}
As $p_1 \,m < 1$, by Theorems 22 and 23 in Dubuc \cite{D}, there is some $\alpha>1$ such that 
\begin{equation*}
\E [ W^{-\alpha}] < \infty.
\end{equation*}
In fact, we may take any $\alpha \in (1, -\frac{\log p_1}{\log m})$, with the convention that $-\frac{\log p_1}{\log m}=+\infty$ if $p_1=0$.  
Moreover, $\E[\nu^{2r}]<\infty$ implies that $\E[W^{2r}]<\infty$, according to Bingham and Doney \cite{BD}.

Recall that the martingale $W_k= m^{-k} \#\T_k$ converges in $L^1$ to $W$. Let 
\begin{equation*}
{\mathscr F}_k \colonequals \sigma\{\# \T_i, i\leq k\}, \quad k\geq 0
\end{equation*}
denote the natural filtration associated to $(W_k)_{k\geq 0}$. Since $W_k= \E[W \, |\, {\mathscr F}_k]$,  it follows from Jensen's inequality that $(W_k)^{-\alpha} \le \E[W^{-\alpha} \, |\, {\mathscr F}_k ]$. Consequently, 
\begin{equation}
\label{eq:Wk} 
\sup_{k\geq1} \E \big[(W_k)^{-\alpha} \big] < \infty.
\end{equation}

Fix an arbitrary  $s \in (1, r\wedge \alpha)$. By convexity, we deduce from \eqref{eq:upperRn} that 
\begin{eqnarray*}
\big(\frac{R_n}{n}\big)^s
 &\leq&   \frac1{n} \, \sum_{k=1}^n  \Big( \sum_{|x|=k} m^{-k} \xi_x  \big(\frac{W^{(x)}}{W} \big)^2\Big)^s
  \\
 &\leq&  \frac1{n} \, \sum_{k=1}^n \,  (\# \T_k )^{s-1}   \sum_{|x|=k} \, m^{-k s}\,  (\xi_x)^s\,   \big(\frac{W^{(x)}}{W}\big)^{2s}. 
\end{eqnarray*}

\noindent Since $\E[\xi^s]< \infty$, the proof boils down to showing that 
\begin{equation}
\label{eq:Ls}  
\sup_{k\geq 1} \E \bigg[  (\# \T_k )^{s-1}   \sum_{|x|=k} m^{-k s}    \big(\frac{W^{(x)}}{W}\big)^{2s}\bigg] < \infty.
\end{equation}

Recall that $W= \sum_{|x|=k}  m^{-k} W^{(x)}$, and conditioning on ${\mathscr F}_k$, $(W^{(x)})_{|x|=k}$ are i.i.d.~copies of $W$. Let  $\phi(u)\colonequals - \log \E[e^{- u W}]$ for any $u\geq 0$. Using the elementary identity
\begin{equation*}
a^{-2s} =\frac1{\Gamma(2s)}  \int_0^\infty   \, t^{2 s -1} \, e^{- a t}\, d t  \quad \mbox{for any }a>0,
\end{equation*}  
we get that for any vertex $x$ at depth $k$, 
\begin{eqnarray*}
\E\bigg[ \big(\frac{W^{(x)}}{W}\big)^{2s} \, \Big|\, {\mathscr F}_k \bigg] 
&=&
\frac1{\Gamma(2s)}  \int_0^\infty  \, d t \, t^{2 s -1}  \E\Big[   (W^{(x)})^{2s}\, e^{- t \sum_{|y|=k}  m^{-k} W^{(y)}} \, |\, {\mathscr F}_k \Big] 
\\
&=&
\frac1{\Gamma(2s)}  \int_0^\infty  \, d t \, t^{2 s -1} \, e^{- (\#\T_k-1) \phi(t m^{-k})}\, \E\big[W^{2s} e^{- t m^{-k} W}\big]
\\
&=&
\frac1{\Gamma(2s)} \,m^{2  k s }  \int_0^\infty  \, d u \, u^{2 s -1} \, e^{- (\#\T_k-1) \phi(u)}\, \E\big[W^{2s} e^{- u W}\big].
\end{eqnarray*}
It follows that 
\begin{eqnarray}
I_k &\colonequals & \E \bigg[  (\# \T_k )^{s-1}   \sum_{|x|=k} m^{-k s}    \big(\frac{W^{(x)}}{W}\big)^{2s}\bigg] \nonumber \\
&=& \frac{1}{\Gamma(2s)} \, m^{k s }  \int_0^\infty  \, d u \, u^{2 s -1} \, \E\big[ (\#\T_k)^{s}\,  e^{- (\#\T_k-1) \phi(u)}\big] \, \E\big[W^{2s} e^{- u W}\big]. \label{eq:I_k}
\end{eqnarray}

For any $a>0$, we claim that there exits some positive constant $C=C(a, s)>0$ such that for any $k\ge 1$, 
\begin{equation}
\label{eq:Cas}
m^{k s }  \, \E\big[ (\#\T_k)^{s}\,  e^{-  a (\#\T_k-1) }\big]  \le C.
\end{equation}
Indeed, by discussing whether $\#T_k\geq k^2$ or not, we have
\begin{equation*}
m^{k s }  \, \E\big[ (\#\T_k)^{s}\,  e^{- a \#\T_k   }\big] 
 \leq  m^{k s }  \, \sup_{y \ge k^2} y^s e^{- a y} + m^{k s }  \,  k^{2 s}\,  \P\big( \# \T_k < k^2\big).
\end{equation*}
The first term in the right-hand side is uniformly bounded, while 
\begin{equation*}
m^{k s }  \,  k^{2 s}\,  \P\big( \# \T_k < k^2\big) \leq m^{k s }  \,  k^{2 s+2 \alpha} \, \E\big[ (\# \T_k)^{-\alpha}\big].
\end{equation*}
Note that $ \E[(\# T_k)^{-\alpha}]= O(m^{-\alpha k})$ by \eqref{eq:Wk}. Since $s <\alpha$, we obtain \eqref{eq:Cas}. 

Recall that $\E[W^{2s}]<\infty$ because $s<r$. 
Going back to  the right-hand side of \eqref{eq:I_k}, we split the integral $\int_0^\infty$ into two parts $\int_0^1$ and $\int_1^\infty$. For the part $\int_1^\infty$ we apply \eqref{eq:Cas} with $a=\phi(1)$, and for the part $\int_0^1$ we dominate $\E[W^{2s} e^{- u W}] $ by $ \E[W^{2s}]$, to arrive at
\begin{equation*}
I_k \leq  \frac{C}{\Gamma(2s)} \int_1^\infty  \, d u \, u^{2 s -1} \,   \E\big[W^{2s} e^{- u W}\big]
+  C' m^{k s } \, \int_0^1  \, d u \, u^{2 s -1} \, \E\big[ (\#\T_k)^{s}\,  e^{- \#\T_k \phi(u)}\big]  ,
\end{equation*}
with the finite constant
\begin{equation*}
C'\colonequals \frac{e^{\phi(1)}  \E[W^{2s} ]}{\Gamma(2s)}.
\end{equation*}
Notice that by Fubini's theorem and a change of variables $v=uW$,
\begin{equation*}
\int_1^\infty  d u \, u^{2 s -1} \,   \E\big[W^{2s} e^{- u W}\big] \leq \int_0^\infty  d u \, u^{2 s -1} \,   \E\big[W^{2s} e^{- u W}\big] = \Gamma(2s).
\end{equation*}
To treat the integral from 0 to 1, we remark that  $\lim_{u\to 0} \frac{\phi(u)}{u}= \E[W]=1$. Then there exists some positive constant $c$,  such that $\phi(u) \geq \frac{u}{c} $ for all $0\leq u \leq 1$. It follows that 
\begin{eqnarray*}
I_k &\leq&  C+ C'  m^{k s } \, \int_0^1  \, d u \, u^{2 s -1} \, \E\big[ (\#\T_k)^{s}\,  e^{- \frac{u}{c} \#\T_k  }\big]  \\
&=& C+ C' \, \E \bigg[\int_0^{\#T_k}  d v \, (W_k)^{-s}  v^{2 s- 1} e^{-\frac{v}{c}}\bigg] \\
&\leq& C+C' \, c^{2s} \, \Gamma(2s)\,  \E \big[ (W_k)^{-s} \big].
\end{eqnarray*}
Using again \eqref{eq:Wk} we get that $\sup_{k\geq 1} I_k< \infty$, yielding \eqref{eq:Ls} and completing the proof.
\end{proof}

\section{General exponential weighting}
\label{sec:lambda}

Given the Galton--Watson tree $\T$ and $\lambda>0$, one can do the $\lambda$-exponential weighting of resistance by assigning the resistance $\lambda^{d(e)}\xi(e)$ to each edge $e$ at depth $d(e)$. As before, conditionally on $\T$, $\{\xi(e)\}$ are i.i.d.~positive random variables. In this random electric network, let $C_n(\lambda)$ denote the effective conductance between the root and the vertices at depth $n$.  Instead of \eqref{eq:recur-xi}, the recurrence equation now reads as 
\begin{equation*}
C_{n+1}(\lambda)= \frac1{\lambda} \, \sum_{i=1}^\nu \frac{C_{n}^{(i)}(\lambda)}{1 + \xi_i \, C_{n}^{(i)}(\lambda)},
\end{equation*}
where for $1\leq i \leq \nu$, $C_{n}^{(i)}(\lambda)$ are i.i.d.~copies of $C_{n}(\lambda)$, independent of $(\xi_i)_{1\leq i \leq \nu}$.

\begin{theorem}
\label{thm:lambda}
Fix $\lambda>m$. Assuming that $\E[\xi + \xi^{-1} +\nu^2]<\infty$, we have
\begin{equation*}
\big\{C_n(\lambda)\big\}\build{\longrightarrow}_{n\to\infty}^{\mathrm{a.s.}} W. 
\end{equation*} 
If $\E[\xi^2+\xi^{-1}+\nu^3]<\infty$, then, as $n\to \infty$, the limit of
\begin{equation}
\label{eq:rescale}
\Big(\frac{\lambda}{m}\Big)^n \,\E\big[C_n(\lambda)\big]
\end{equation} 
exists and is strictly positive. 
\end{theorem}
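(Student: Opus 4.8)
The plan is to transcribe the machinery built for the critical case $\lambda=m$, replacing the prefactor $m^{-1}$ of the recurrence by $\lambda^{-1}$; the only structural change is that all moments of $C_n(\lambda)$ now decay geometrically rather than polynomially. Abbreviate $C_n=C_n(\lambda)$ and write $x_n\colonequals\E[C_n]$, $u_n\colonequals(\lambda/m)^n x_n$. Taking expectations in the recurrence gives $x_{n+1}=\frac{m}{\lambda}\,\E[\frac{C_n}{1+\xi C_n}]\le\frac{m}{\lambda}x_n$, hence $u_{n+1}=(\lambda/m)^n\,\E[\frac{C_n}{1+\xi C_n}]\le u_n$. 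Since $C_1(\lambda)=\lambda^{-1}\sum_{i=1}^\nu\eta_i$ yields $u_1=\E[\eta]<\infty$, the sequence $(u_n)$ is nonincreasing and bounded, so it converges to some $u_\infty\ge0$; this is already the existence of the limit of \eqref{eq:rescale}. The entire difficulty is to show $u_\infty>0$.

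For strict positivity I would adapt Lemma~\ref{lem:lower-bd-cond}. With the same unit uniform flow $\Theta_{\mathsf{unif}}$, Thomson's principle now gives, in place of \eqref{eq:upperRn},
\[
R_n(\lambda)\le\frac{1}{W^2}\sum_{k=1}^n\Big(\frac{\lambda}{m^2}\Big)^k\sum_{|x|=k}\xi_x\,(W^{(x)})^2.
\]
Writing $\#\T_k=m^kW_k$ with $W_k\le A\colonequals\sup_k m^{-k}\#\T_k$, and putting $B\colonequals\sup_k\frac{1}{\#\T_k}\sum_{|x|=k}\xi_x(W^{(x)})^2$, which is a.s.\ finite because this average converges to $\E[\xi]\,\E[W^2]$ by Proposition~2.3 in \cite{PP}, each summand is at most $(\lambda/m)^kAB$. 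Summing the geometric series gives $(\lambda/m)^{-n}R_n(\lambda)\le\frac{\lambda}{\lambda-m}\frac{AB}{W^2}$ for all $n$, so that almost surely
\[
\liminf_{n\to\infty}(\lambda/m)^nC_n\ge\frac{\lambda-m}{\lambda}\,\frac{W^2}{AB}>0.
\]
Fatou's lemma then yields $u_\infty=\liminf_n u_n\ge\frac{\lambda-m}{\lambda}\,\E[W^2/(AB)]>0$, the last expectation being that of an almost surely positive variable. The decisive feature of this route is that it passes through $(\lambda/m)^nC_n$ and uses only $\E[W]=1$; it never requires a negative moment of $W$, and so it is insensitive to whether $p_1m<1$.

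Granting $u_\infty>0$, and thus $x_n\asymp(m/\lambda)^n$, the almost sure convergence $\{C_n(\lambda)\}\to W$ follows the scheme of the proof of Theorem~\ref{thm:cv-Cn}. Mimicking Lemma~\ref{lem:moment-bd-cond}, the second-moment recurrence $\E[C_{n+1}^2]=\frac{m}{\lambda^2}\E[(\frac{C_n}{1+\xi C_n})^2]+\frac{\E[\nu(\nu-1)]}{\lambda^2}(\E[\frac{C_n}{1+\xi C_n}])^2$, together with $x_n\le(m/\lambda)^n\E[\eta]$ and $\frac{m}{\lambda^2}<(\frac{m}{\lambda})^2$, gives by induction $\E[C_n^2]=O((m/\lambda)^{2n})$; combined with $x_n\asymp(m/\lambda)^n$ this makes $\E[\{C_n\}^2]=O(1)$. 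Setting $Y_n\colonequals\{C_n\}-W$ and decomposing as in Section~\ref{sec:as-cv}, one gets $Y_n=m^{-k}\sum_{|x|=k}Y_{n-k}^{(x)}+\sum_{\ell=1}^k m^{-\ell}\sum_{|y|=\ell}\Pi_{n-\ell}^{(y)}$ for a correction term $\Pi$ built from $\frac{m}{\lambda x_n}-\frac{1}{x_{n-1}}$ and $\frac{\xi C_{n-1}^2}{1+\xi C_{n-1}}$. Since $mx_{n-1}-\lambda x_n=m\,\E[\frac{\xi C_{n-1}^2}{1+\xi C_{n-1}}]=O((m/\lambda)^{2n})$ while $x_n\asymp(m/\lambda)^n$, a direct estimate gives the summable geometric bound $\E|\Pi_n|=O((m/\lambda)^n)$.

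It then remains to balance the truncation level. The $L^2$-boundedness of $\{C_n\}$ gives $\|m^{-k}\sum_{|x|=k}Y_{n-k}^{(x)}\|_{L^2}\le\sqrt{C'm^{-k}}$, while the correction contributes $\sum_{\ell=1}^k m^{-\ell}\E[\#\T_\ell]\,\E|\Pi_{n-\ell}|=O((m/\lambda)^{n-k})$. Taking $k=\lfloor\rho n\rfloor$ for any fixed $\rho\in(0,1)$ makes both terms geometrically small, so $\E|Y_n|=O(\kappa^n)$ for some $\kappa<1$. Hence $\sum_n\E|Y_n|<\infty$, and Borel--Cantelli gives $Y_n\to0$ almost surely along the full sequence — no monotonicity-squeeze over a subsequence is needed (and indeed it would fail here, since $x_{n_j}/x_{n_{j+1}}$ blows up along $n_j=j^2$). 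The main obstacle throughout is the strict positivity of $u_\infty$, together with the attendant circularity that the $L^2$-boundedness of $\{C_n\}$ presupposes the lower bound $x_n\gtrsim(m/\lambda)^n$; the flow-and-Fatou argument of the second paragraph is precisely what breaks this circularity.
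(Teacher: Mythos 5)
Your proof is correct, and it actually supplies details the paper omits: the authors' ``proof'' of Theorem~\ref{thm:lambda} is a one-line remark that it goes along the same lines as Theorem~\ref{thm:cv-Cn} and the proof of \eqref{eq:cv-ncn}, with details left to the reader. Your transcription of that machinery is faithful but contains two genuine deviations, both of which are improvements rather than gaps. First, for the existence of the limit of $(\lambda/m)^n\,\E[C_n(\lambda)]$ you do not re-run the moment-recursion expansion of Section~\ref{sec:expect-cn} (which would require control of $\E[C_n^3]$, hence the stronger hypothesis $\E[\xi^2+\nu^3]<\infty$); instead you note that $u_n=(\lambda/m)^n\E[C_n(\lambda)]$ is nonincreasing because $\frac{C_n}{1+\xi C_n}\le C_n$, which is simpler and, combined with your flow bound, gives both existence and strict positivity already under $\E[\xi+\xi^{-1}+\nu^2]<\infty$ --- weaker than what the theorem assumes for that part. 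The positivity step itself is exactly Lemma~\ref{lem:lower-bd-cond} with $\lambda^k$ replacing $m^k$ in Thomson's principle, and you correctly note that it needs only $\E[W]=1$ and no negative moment of $W$, consistent with the absence of the condition $p_1 m<1$ in Theorem~\ref{thm:lambda}; your remark about the circularity (the $L^2$-boundedness of $\{C_n(\lambda)\}$ presupposes $x_n\gtrsim (m/\lambda)^n$, which the flow argument provides independently of the recursion analysis) reflects the same logical structure as in the critical case. Second, you identify the one spot where the critical-case proof does \emph{not} transcribe verbatim: the subsequence $n_j=j^2$ plus monotonicity-squeeze at the end of the proof of Theorem~\ref{thm:cv-Cn} is indeed useless here, since $x_{n_j}/x_{n_{j+1}}\asymp(\lambda/m)^{2j+1}\to\infty$; but with the truncation level $k=\lfloor\rho n\rfloor$ (rather than $k\asymp\log n$) both error terms $\sqrt{C'm^{-k}}$ and $O((m/\lambda)^{n-k})$ decay geometrically, so $\sum_n\E|Y_n|<\infty$ and Borel--Cantelli applies along the full sequence. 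This is precisely the ``minor modification'' the paper alludes to, made explicit and verified.
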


It is easy to see that the limit of the rescaled expected conductance \eqref{eq:rescale} is strictly smaller than $\E[\xi^{-1}]$. However, we are unable to compute it explicitly. 

Basically the proof of Theorem \ref{thm:lambda} goes along the same lines as Theorem \ref{thm:cv-Cn} and that of \eqref{eq:cv-ncn}, except a few minor modifications. We leave the details to the reader.

\section*{Acknowledgements}
We are grateful to an anonymous referee for careful reading of the manuscript and helpful comments.

\end{document}